\newtheorem{thm}{Theorem}[section]
\newtheorem{Lemma}[thm]{Lemma}
\newtheorem{cor}[thm]{Corollary}
\newtheorem{pro}[thm]{Proposition}
\title{{Resistance distance-based graph invariants of subdivisions and triangulations of graphs}}
\author{Yujun Yang$^{1,2}$, Douglas J. Klein$^{1}$
\\\small{1. Department of Marine Sciences, Texas A\&M University at Galveston,}
\\\small{Galveston, Texas, 77553-1675}
\\\small{2. School of Mathematics and Information Science, Yantai University,}
\\\small{Yantai, Shandong, 264005, P.R. China}
\\\small{E-mail addresses: yangy@tamug.edu, kleind@tamug.edu}}
\date{}
\begin{document}

\maketitle \baselineskip 18pt
\begin{abstract}
We study three resistance distance-based graph invariants: the Kirchhoff index, and two modifications, namely, the multiplicative degree-Kirchhoff index and the additive degree-Kirchhoff index. In work in press, one of the present authors (2014) and Sun et al. (2014) independently obtained (different) formulas for the Kirchhoff index of subdivisions of graphs. Huang et al. (2014) obtained a formula for the Kirchhoff index of triangulations of graphs. In our paper, first we derive formulae for the additive degree-Kirchhoff index and the multiplicative degree-Kirchhoff index of subdivisions and triangulations, as well as a new formula for the Kirchhoff index of triangulations, in terms of invariants of $G$. Then comparisons are made between each of our Kirchhoffian graph invariants for subdivision and triangulation. Finally, formulae for these graph invariants of iterated subdivisions and triangulations of graphs are obtained.
\vspace{0.5cm}

\noindent \textbf{Key words:} resistance distance, Kirchhoff index, subdivision, triangulation, additive degree-Kirchhoff index, multiplicative degree-Kirchhoff index
\end{abstract}

\section{Introduction}
Distance based graph invariants, such as the Wiener index, and the Szeged index, have been widely studied (see, e.g. \cite{ah,cll,kn,kn1,kya} and references therein). In 1993, a new distance function, named resistance distance \cite{kr}, was identified as an alternative of the ordinary (shortest path) distance. This new intrinsic graph metric, which comes from electrical network theory and generalizes the ordinary distance to some extent, turns out to have many nicely pure mathematical interpretations \cite{cz,ds,k,ki,nash,sh,ss,ss1,xg}. Since then, resistance distance, and invariants based on it, have been extensively studied.

Let $G=(V(G),E(G))$ be a connected graph. The \textit{resistance distance} \cite{s1,s2,s3,gg,kr} between a pair of vertices $i$ and $j$, denoted by $\Omega_{ij}$, is the net effective resistance measured across nodes $i$ and $j$ in the electrical network constructed from $G$ by replacing each edge
with a unit resistor.

Analogous to distance-based graph invariants, various graph invariants based on resistance distance have been defined and studied. Among these invariants, the most famous one is the \textit{Kirchhoff index} \cite{kr}, also known as the \textit{total effective resistance} \cite{gbs} or the \textit{effective graph resistance} \cite{esv}, which is denoted by $R(G)$ and defined as the sum of resistance distances between all pairs of vertices of $G$, i.e.
\begin{equation}
R(G)=\sum_{\{i,j\}\subseteq V}\Omega_{ij}.
\end{equation}
Much attention has been given in recent years to this index. For more information, the readers are referred to most recent papers \cite{bcpt,ch2, das,dc,dc1,ns,ssn,yyh,yk,zzh} and references therein.

Recently, two modifications of the Kirchhoff index, which takes the degrees of the graph into account, have been considered.
One is the \textit{multiplicative degree-Kirchhoff index} defined by Chen and Zhang \cite{cz1}:
\begin{equation}
R^*(G)=\sum_{\{i,j\}\subseteq V}d_id_j\Omega_{ij},
\end{equation}
where $d_i$ is the degree (i.e., the number of neighbors) of the vertex $i$. The other one is the \textit{additive degree-Kirchhoff index} defined by Gutman et al. \cite{gf}:
\begin{equation}
R^+(G)=\sum_{\{i,j\}\subseteq V}(d_i+d_j)\Omega_{ij}.
\end{equation}
For more work on these two modifications, the readers are referred to recent papers \cite{bcpt1,fgy,nag,pa,pr,yang}.

The \textit{subdivision} of $G$, denoted by $S(G$), is the graph obtained by replacing every edge in $G$ with a copy of $P_2$ (path of length two). The \textit{triangulation} \cite{yy,rk} of $G$, denoted by $T(G)$, is the graph obtained from $G$ by changing each edge $uv$ of $G$ into a triangle $uwv$ with $w$ the new vertex associated with $uv$.
For example, the subdivision and the triangulation of the five-vertex complete graph $K_5$ are shown in Figure 1.

\begin{figure}[h]
\begin{center}
\includegraphics[scale=0.70
]{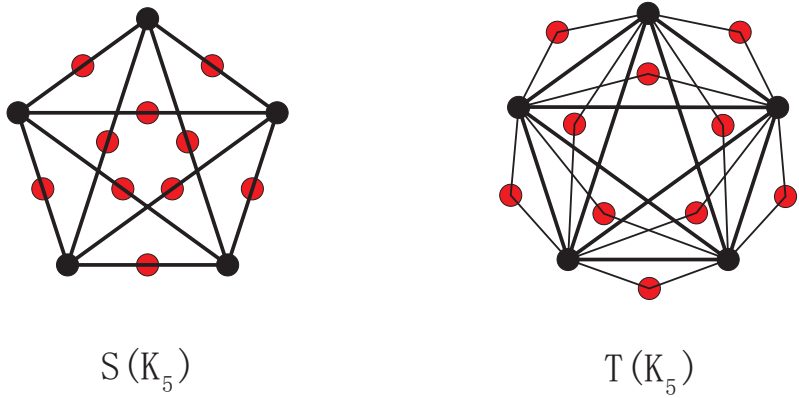}\caption{Graphs $S(K_5)$ and $T(K_5)$.}
\end{center}
\end{figure}

In \cite{gl}, Gao et al. obtained a formula for the Kirchhoff index of $S(G)$ for a regular graph $G$. Then one of the present authors \cite{yang}, and Sun et al. \cite{swzb} independently extended it to general graphs, with $R(S(G))$ being expressed in different ways. In \cite{yang}, it is showed that for a general graph $G$, the Kirchhoff index of $S(G)$ could be expressed in terms of $R(G)$, $R^+(G)$, $R^*(G)$, $|V(G)|$, and $|E(G)|$. For the triangulation of a regular graph $G$, Wang et al. \cite{wyl} obtained a formula for $R(T(G))$. Then Huang et al. \cite{hzb} generalized their results to general graphs, though this formula features the group inverse $L^\#(G)$ of the Laplacian matrix of $G$ in their expression. In this paper, we obtain a new formula for $R(T(G))$, expressed in terms of ordinary graph invariants of $G$, much as for $S(G)$. In addition, formulae for the additive degree-Kirchhoff index and the multiplicative degree-Kirchhoff index of $S(G)$ and $T(G)$ are obtained in terms of the same graph invariants of $G$. From these results, for each graph invariant $\mathbb{I}$ ($\mathbb{I}\in \{R,R^+,R^*\}$), a comparison between $\mathbb{I}(S(G))$ and $\mathbb{I}(R(G))$ is obtained, to show that the $\mathbb{I}(S(G))$ is a linear function of the $\mathbb{I}(R(G))$. Finally, formulae for the three resistance distance-based graph invariants of the $k$-th iterated subdivision and triangulation of $G$ are also obtained.

\section{Formulae for $R^+(S(G))$ and $R^*(S(G))$}
Let $G=(V(G),E(G))$ be a connected graph with $n$ vertices and $m$ edges ($n\geq 2$). In what follows, for simplicity, we use $V$ and $E$ to denoted $V(G)$ and $E(G)$, respectively. We suppose that $|V|=n$ and $|E|=m$. For $i\in V$, we use $\Gamma(i)$ to denote the neighbor set of $i$ in $G$. Since the subdivision graph $S(G)$ is the graph obtained by inserting an additional vertex in each edge of $G$, the vertex set $V(S(G))$ of $S(G)$ may be written as $V(S(G))=V\cup V'$, where $V'$ denotes the set of inserted vertices. Clearly $|V'|=|E|=m$ and $|V(S(G))|=n+m$. In the following, for convenience, we use $\Omega^S_{ij}$ to denote the resistance distance between $i$ and $j$ in $S(G)$.

In \cite{cz}, Chen and Zhang gave a complete characterization to resistance distances in $S(G)$ in terms of resistance distances in $G$. Their result, as given in the following lemma, plays an essential rule.
\begin{Lemma}\cite{cz}\label{sg}
Resistance distances in $S(G)$ can be computed as follows:

(1) For $i,j\in V$,  $$\Omega_{ij}^S=2\Omega_{ij}.$$

(2) For $i\in V'$, $\Gamma(i)=\{k,l\}$, $$\Omega_{ik}^S=\Omega_{il}^S=\frac{1+\Omega_{kl}}{2}.$$

(3) For $i\in V'$, $\Gamma(i)=\{k,l\}$, and $j\in V$, $j\neq k, l$,
$$\Omega_{ij}^S=\frac{1+2\Omega_{kj}+2\Omega_{lj}-\Omega_{kl}}{2}.$$

(4) For $i,j\in V'$, $\Gamma(i)=\{k,l\}$, $\Gamma(j)=\{p,q\}$,
$$\Omega_{ij}^S=\frac{2+\Omega_{pk}+\Omega_{qk}+\Omega_{pl}+\Omega_{ql}-\Omega_{kl}-\Omega_{pq}}{2}.$$
\end{Lemma}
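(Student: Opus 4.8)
The plan is to obtain all four cases from case (1) together with a single \emph{master formula} for the resistance from an inserted vertex to an arbitrary node. Throughout I write $\Omega^S$ for resistances in $S(G)$ and $\Omega$ for those in $G$, and I use the standard superposition identity: if a current vector $(c_s)_s$ with $\sum_s c_s=0$ is imposed on a resistive network, the resulting potentials obey $V_a-V_b=-\tfrac12\sum_s c_s(\Omega_{sa}-\Omega_{sb})$ for all nodes $a,b$ (this follows from $\Omega_{sa}=L^+_{ss}-2L^+_{sa}+L^+_{aa}$ and $V=L^+c$). For case (1), every inserted vertex $w\in V'$ on an edge $kl$ has degree $2$ with two unit resistors, so a series reduction replaces the path $k$--$w$--$l$ by a single resistor of value $2$ between $k$ and $l$; eliminating all of $V'$ in this way turns $S(G)$ into $G$ with every edge-resistance doubled, and since effective resistance scales linearly under a global rescaling of resistances we get $\Omega^S_{ij}=2\Omega_{ij}$ for $i,j\in V$.

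For case (2), fix $w\in V'$ on $kl$ and inject a unit current at $w$ with sink at $k$. Writing $I_1,I_2$ for the currents leaving $w$ toward $k$ and $l$ (so $I_1+I_2=1$ and $\Omega^S_{wk}=V_w-V_k=I_1$), the current $I_2$ can only return to $k$ through the subnetwork $S(G)-w=S(G-kl)$, whose $k$--$l$ resistance is $\rho:=\Omega^{S(G-kl)}_{kl}=2\,\Omega^{G-kl}_{kl}$ by case (1). Then $V_l-V_k=I_2\rho$ while also $V_l-V_k=I_1-I_2$, which gives $I_1=(1+\rho)/(2+\rho)$. Finally the parallel law for the edge $kl$ (a unit resistor in parallel with the rest of $G$) yields $\Omega^{G-kl}_{kl}=\Omega_{kl}/(1-\Omega_{kl})$, and substituting simplifies $I_1$ to $(1+\Omega_{kl})/2$; the value for $\Omega^S_{wl}$ is the same by the $k\leftrightarrow l$ symmetry.

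The heart of the argument is a master formula covering (3) and (4) simultaneously. For $w\in V'$ on $kl$ and \emph{any} node $j\neq w$, inject $+1$ at $w$ and $-1$ at $j$, and again let $I_1,I_2$ be the branch currents out of $w$. Superposition gives $V_k-V_j=\tfrac12(\Omega^S_{wj}+\Omega^S_{jk}-\Omega^S_{wk})$, and combining with $\Omega^S_{wj}=(V_k-V_j)+I_1$ yields $\Omega^S_{wj}=\Omega^S_{jk}-\Omega^S_{wk}+2I_1$; the analogous relation at $l$ gives $\Omega^S_{wj}=\Omega^S_{jl}-\Omega^S_{wl}+2I_2$. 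Adding the two and using $I_1+I_2=1$ together with $\Omega^S_{wk}=\Omega^S_{wl}=(1+\Omega_{kl})/2$ from case (2) eliminates the unknown split and produces
\[
\Omega^S_{wj}=\frac{\Omega^S_{jk}+\Omega^S_{jl}+1-\Omega_{kl}}{2}.
\]

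With this in hand, case (3) is the master formula for $j\in V$, inserting $\Omega^S_{jk}=2\Omega_{jk}$ and $\Omega^S_{jl}=2\Omega_{jl}$ from case (1); and case (4) is the same formula for $j\in V'$ on an edge $pq$, where now $\Omega^S_{jk}$ and $\Omega^S_{jl}$ are themselves furnished by case (3), and a one-line substitution gives the stated expression. The main obstacle is conceptual rather than computational: one must accommodate a current source sitting \emph{at} a degree-$2$ internal vertex $w$, whose own potential is not directly tied to $G$. The device that resolves this is to keep the branch currents $I_1,I_2$ symbolic and arrange the algebra so that only their sum $I_1+I_2=1$ is ever needed, which is exactly what the symmetric pair of relations at $k$ and $l$ accomplishes; everything after that reduces to routine substitution.
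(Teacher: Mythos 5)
The paper does not prove this lemma at all: it is imported verbatim from Chen and Zhang \cite{cz}, so there is no internal proof to compare against. Judged on its own, your circuit-theoretic derivation is correct and pleasantly economical. Part (1) by series reduction plus linearity under a global rescaling of resistances is standard and sound. In part (2) the current-splitting computation checks out: $I_1=(1+\rho)/(2+\rho)$ with $\rho=2\Omega^{G-kl}_{kl}=2\Omega_{kl}/(1-\Omega_{kl})$ does simplify to $(1+\Omega_{kl})/2$. The one place that deserves an explicit word is the degenerate case where $kl$ is a bridge of $G$: then $G-kl$ is disconnected, $\rho$ and $\Omega_{kl}/(1-\Omega_{kl})$ are infinite, and the parallel-law identity is vacuous; the conclusion $\Omega^S_{wk}=1=(1+\Omega_{kl})/2$ still holds (all current takes the direct branch, $I_2=0$), but this should be handled as a separate sentence rather than pushed through the algebra. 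The master formula
\[
\Omega^S_{wj}=\frac{1+\Omega^S_{jk}+\Omega^S_{jl}-\Omega_{kl}}{2}
\]
is the real content of your argument, and its derivation is valid: the superposition identity you quote is correct (it is gauge-independent because $\sum_s c_s=0$), the relations $\Omega^S_{wj}=\Omega^S_{jk}-\Omega^S_{wk}+2I_1$ and its analogue at $l$ follow from Ohm's law on the two unit branches at $w$, and adding them eliminates the unknown split $I_1,I_2$ using only $I_1+I_2=1$ and part (2). Cases (3) and (4) then drop out by substituting (1), respectively (3), for $\Omega^S_{jk}$ and $\Omega^S_{jl}$, and the same formula also recovers (2) as the $j\in\{k,l\}$ specialization --- consistent with the remark in the paper that (2) is subsumed by (3). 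In short: a correct, self-contained proof by elementary network reductions of a result the paper only cites; add the bridge caveat in (2) and it is complete.
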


In fact, it is easily observed that if $j=k$ or $j=l$, then (2) is deduced from (3). Hence for $i\in V'$, $\Gamma(i)=\{k,l\}$, and $j\in V$, we have
\begin{equation}
\Omega_{ij}^S=\frac{1+2\Omega_{kj}+2\Omega_{lj}-\Omega_{kl}}{2}.
\end{equation}

To obtain the main result, we also need to introduce Foster's (first) formula in electrical network theory.
\begin{Lemma}(Foster's formula)\cite{f}
Let $G$ be a connected graph with $n$ vertices. Then the sum of resistance distances between all pairs of adjacent vertices is equal to $n-1$, i.e.
\begin{equation}
\sum_{ij\in E}\Omega_{ij}=n-1,
\end{equation}
where the summation is taken over all the edges of $G$.
\end{Lemma}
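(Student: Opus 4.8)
The plan is to prove Foster's formula via the matrix representation of resistance distance in terms of the Laplacian $L=D-A$ of $G$, where $D$ is the diagonal degree matrix and $A$ the adjacency matrix. Since $G$ is connected, the kernel of $L$ is exactly the span of the all-ones vector $\mathbf{1}$, so $L$ is singular but admits a Moore--Penrose pseudoinverse $L^{+}$ (which, for symmetric $L$, coincides with the group inverse $L^{\#}$ referenced earlier). The first step is to recall the standard identity
$$\Omega_{ij}=(e_i-e_j)^{\top}L^{+}(e_i-e_j),$$
where $e_i$ is the $i$-th standard basis vector; this comes from solving the nodal equations $Lx=e_i-e_j$ for the potentials induced by a unit current injected at $i$ and extracted at $j$.

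Next I would exploit the fact that each summand $\Omega_{ij}$ is a scalar and hence equals its own trace, so that
$$\Omega_{ij}=\mathrm{tr}\!\left[L^{+}(e_i-e_j)(e_i-e_j)^{\top}\right].$$
Summing over all edges and pulling the sum inside the trace reduces matters to evaluating $\sum_{ij\in E}(e_i-e_j)(e_i-e_j)^{\top}$. The key algebraic observation is that this edge-sum of rank-one matrices reconstructs the Laplacian, i.e. $\sum_{ij\in E}(e_i-e_j)(e_i-e_j)^{\top}=L$, which is just the incidence-matrix factorization $L=BB^{\top}$ read edge by edge. Consequently
$$\sum_{ij\in E}\Omega_{ij}=\mathrm{tr}\!\left[L^{+}L\right].$$

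The final step is to identify $L^{+}L$. Because $L$ is symmetric with $\ker L=\mathrm{span}\{\mathbf{1}\}$, the product $L^{+}L$ is the orthogonal projection onto the range of $L$, namely $\mathbf{1}^{\perp}$; explicitly $L^{+}L=I-\tfrac{1}{n}J$, where $J$ is the all-ones matrix. Taking the trace gives $\mathrm{tr}[L^{+}L]=n-\tfrac{1}{n}\,\mathrm{tr}\,J=n-1$, the desired formula.

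The main obstacle, and essentially the only point requiring care, is the handling of the singularity of $L$: one must justify both the pseudoinverse representation of $\Omega_{ij}$ and the projection identity $L^{+}L=I-\tfrac1nJ$, each of which hinges on connectivity of $G$ to guarantee $\ker L=\mathrm{span}\{\mathbf{1}\}$. As an alternative that avoids matrix pseudoinverses entirely, one could argue combinatorially: writing $\Omega_{ij}=F_{ij}/t(G)$, where $t(G)$ is the number of spanning trees and $F_{ij}$ counts spanning $2$-forests separating $i$ and $j$, the quantity $\sum_{ij\in E}F_{ij}$ counts pairs (spanning tree, edge of that tree) via the bijection that restores the edge $ij$ to its separating forest; since every spanning tree has exactly $n-1$ edges this sum equals $(n-1)\,t(G)$, and dividing by $t(G)$ again yields $n-1$.
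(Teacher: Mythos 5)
Your proof is correct. Note, however, that the paper does not prove this lemma at all: Foster's formula is imported as a known result with a citation to Foster (1949), so there is no in-paper argument to compare against. Your spectral argument is the standard modern one and is sound at every step: the representation $\Omega_{ij}=(e_i-e_j)^{\top}L^{+}(e_i-e_j)$, the identity $\sum_{ij\in E}(e_i-e_j)(e_i-e_j)^{\top}=L$ (the edge-by-edge reading of $L=BB^{\top}$), and the projection identity $L^{+}L=I-\tfrac{1}{n}J$ valid precisely because connectivity forces $\ker L=\mathrm{span}\{\mathbf{1}\}$, whence $\operatorname{tr}(L^{+}L)=n-1$. Your alternative combinatorial argument via $\Omega_{ij}=F_{ij}/t(G)$ is also correct: for an edge $ij$, deleting $ij$ from a spanning tree containing it gives a $2$-forest separating $i$ from $j$ and vice versa, so $\sum_{ij\in E}F_{ij}$ double-counts pairs (spanning tree, tree edge) and equals $(n-1)t(G)$. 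The matrix route generalizes cleanly (e.g.\ to weighted Laplacians and to Foster's higher-order formulas via traces of powers of the transition operator), while the forest-counting route is elementary and makes the integer $n-1$ transparent as the edge count of a spanning tree; either would serve as a legitimate self-contained proof of the lemma the paper merely cites.
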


In \cite{gl}, Gao et al. obtained formula for the Kirchhoff index of $S(G)$ for a regular graph $G$. Then one of the present authors \cite{yang}, and Sun et al. \cite{swzb} independently extended it to general graphs, with $R(S(G))$ being expressed in different ways. In \cite{yang}, it was shown that
\begin{thm}\label{main}\cite{yang}
Let $G$ be a connected graph with $n\geq 2$ vertices and $m$ edges. Then
\begin{equation}\label{e4}
R(S(G))=2R(G)+R^+(G)+\frac{1}{2}R^*(G)+\frac{m^2-n^2+n}{2}.
\end{equation}
\end{thm}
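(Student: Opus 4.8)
The plan is to split the Kirchhoff index of $S(G)$ according to the bipartition $V(S(G)) = V \cup V'$ and to evaluate each block with the explicit resistance formulas of Lemma \ref{sg}. Writing
\begin{equation*}
R(S(G)) = \sum_{\{i,j\}\subseteq V}\Omega^S_{ij} + \sum_{i\in V'}\sum_{j\in V}\Omega^S_{ij} + \sum_{\{i,j\}\subseteq V'}\Omega^S_{ij},
\end{equation*}
I would treat the three blocks separately. The first (old--old) block is immediate from part (1): since $\Omega^S_{ij}=2\Omega_{ij}$, it contributes exactly $2R(G)$.

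For the second (old--new) block I would use the consolidated formula (4), so that for the inserted vertex $i$ sitting on the edge $kl$ and any $j\in V$ one has $\Omega^S_{ij} = \tfrac12(1+2\Omega_{kj}+2\Omega_{lj}-\Omega_{kl})$. Summing first over $j\in V$ and then over the edges $kl\in E$ (which index the new vertices), and introducing the vertex transmission $R_k := \sum_{j\in V}\Omega_{kj}$, the key identities are $\sum_{kl\in E}(R_k+R_l)=\sum_{v\in V}d_v R_v = R^+(G)$ --- each vertex being counted $d_v$ times --- together with Foster's formula $\sum_{kl\in E}\Omega_{kl}=n-1$. These collapse the block to $\tfrac{nm}{2}+R^+(G)-\tfrac{n(n-1)}{2}$.

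The third (new--new) block is the heart of the argument and the step I expect to be the main obstacle. Here each unordered pair $\{i,j\}\subseteq V'$ corresponds to a pair of edges $kl$ and $pq$, and part (4) gives $\Omega^S_{ij}=\tfrac12(2+\Omega_{pk}+\Omega_{qk}+\Omega_{pl}+\Omega_{ql}-\Omega_{kl}-\Omega_{pq})$. The constant and the two subtracted edge-resistances are routine (Foster again handles the latter), but the four-term cross block $\Omega_{pk}+\Omega_{qk}+\Omega_{pl}+\Omega_{ql}$ must be recognised as the multiplicative degree-Kirchhoff index. The clean way to see this is to pass to a sum over all ordered pairs of edges, noting that this cross block summed over all ordered $(e,f)$ equals $\sum_{v,w\in V}d_v d_w\,\Omega_{vw}=2R^*(G)$ --- since for fixed $v,w$ the term $\Omega_{vw}$ appears once for each edge incident to $v$ and each edge incident to $w$, i.e. $d_v d_w$ times --- and then subtracting the diagonal $e=f$ contribution, which reduces to $2\sum_{e\in E}\Omega_e=2(n-1)$ by Foster's formula, before halving to return to unordered pairs. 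Keeping these three corrections straight (the diagonal term, the factor of two between ordered and unordered pairs, and the Foster substitutions) is precisely where the computation is delicate. The outcome is that this block equals $\tfrac12 R^*(G)+\tfrac{m(m-n)}{2}$.

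Finally I would add the three contributions. The resistance-distance invariants assemble directly into $2R(G)+R^+(G)+\tfrac12 R^*(G)$, and the remaining constants $\tfrac{nm}{2}-\tfrac{n(n-1)}{2}+\tfrac{m(m-n)}{2}$ simplify to $\tfrac{m^2-n^2+n}{2}$, yielding the stated formula \eqref{e4}.
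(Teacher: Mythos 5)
Your proposal is correct and takes essentially the same approach as the source: the paper itself only states this theorem as a citation of \cite{yang}, but the intermediate sums it imports from that reference --- namely Eq.~(\ref{e14}) for the old--new block and Eq.~(\ref{e18}) for the new--new block --- are exactly the two quantities you derive, via the same $V\cup V'$ decomposition, the Foster substitutions, and the ordered-versus-unordered edge-pair count with the diagonal correction. All three block values and the final assembly check out.
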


In the following, we compute the additive degree-Kirchhoff index and the multiplicative degree-Kirchhoff index of $S(G)$. For $i\in V(S(G))$, we use $d^S_i$ to denote the degree of $i$ in $S(G)$. Then it is obvious that for $i\in V$, $d_i^S=d_i$, and for $i\in V'$, $d^S_i=2$.

We can now give our formula for $R^+(S(G))$.
\begin{thm}\label{s+}
Let $G$ be a connected graph with $n\geq 2$ vertices and $m$ edges. Then
\begin{equation}\label{e4}
R^+(S(G))=4R^+(G)+4R^*(G)+(m+n)(m-n+1)+2m(m-n).
\end{equation}
\end{thm}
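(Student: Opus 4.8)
The plan is to expand the defining sum
\[
R^+(S(G))=\sum_{\{i,j\}\subseteq V(S(G))}\bigl(d_i^S+d_j^S\bigr)\,\Omega_{ij}^S
\]
by partitioning the unordered pairs of $V(S(G))=V\cup V'$ into three blocks according to where the two endpoints lie: both in $V$, one in $V$ and one in $V'$, and both in $V'$. Call the resulting partial sums $S_1,S_2,S_3$. Throughout I would use the two elementary facts $d_i^S=d_i$ for $i\in V$ and $d_i^S=2$ for $i\in V'$, together with Lemma~\ref{sg} to replace every $\Omega_{ij}^S$ by resistance distances in $G$, and I would repeatedly invoke the handshake identity $\sum_{i\in V}d_i=2m$ and Foster's formula $\sum_{kl\in E}\Omega_{kl}=n-1$. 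The first block is immediate: since $d_i^S+d_j^S=d_i+d_j$ and $\Omega_{ij}^S=2\Omega_{ij}$, one gets $S_1=2R^+(G)$ at once.

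For the second block I would exploit the bijection between $V'$ and $E$, rewriting the inner sum over inserted vertices as a sum over edges $kl\in E$ and using part (3) of Lemma~\ref{sg} (in the form of equation (4)). After expanding $\sum_{i\in V}\sum_{kl\in E}(d_i+2)\bigl(1+2\Omega_{ki}+2\Omega_{li}-\Omega_{kl}\bigr)/2$, the crucial reduction is the incidence identity $\sum_{kl\in E}(\Omega_{ki}+\Omega_{li})=\sum_{v\in V}d_v\Omega_{vi}$, valid because each vertex $v$ occurs as an edge–endpoint exactly $d_v$ times. This turns the mixed resistance terms into the weighted sums $\sum_{i,v}d_id_v\Omega_{iv}=2R^*(G)$ and $\sum_{i,v}d_v\Omega_{iv}=R^+(G)$ (the latter by the symmetrization $\sum_{i,v}d_v\Omega_{iv}=\tfrac12\sum_{i,v}(d_i+d_v)\Omega_{iv}$ over ordered pairs), while the remaining purely combinatorial pieces collapse via the handshake lemma and Foster's formula. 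I expect this to yield $S_2=2R^+(G)+2R^*(G)+(m+n)(m-n+1)$.

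The hard part will be the third block, where both endpoints are inserted vertices and part (4) of Lemma~\ref{sg} contributes the six-term expression involving the four cross resistances $\Omega_{pk},\Omega_{qk},\Omega_{pl},\Omega_{ql}$ as well as $\Omega_{kl}$ and $\Omega_{pq}$. Identifying $V'$-pairs with unordered pairs of distinct edges $\{e,f\}$, $e=kl$, $f=pq$, I would evaluate the cross-resistance sum by the ordered-pair trick: extend it to all ordered pairs $(e,f)$, compute the full sum using the same endpoint-incidence identity (which again produces $\sum_{u,v}d_ud_v\Omega_{uv}=2R^*(G)$), then subtract the diagonal $e=f$ contribution, which equals $2(n-1)$ by Foster, and finally halve to return to unordered pairs; this gives $\sum_{\{e,f\}}(\Omega_{pk}+\Omega_{qk}+\Omega_{pl}+\Omega_{ql})=R^*(G)-(n-1)$. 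The two self-resistance terms are simpler: each $\Omega_{kl}$ appears in $m-1$ pairs, so $\sum_{\{e,f\}}(\Omega_{kl}+\Omega_{pq})=(m-1)(n-1)$. The main obstacle here is purely bookkeeping—correctly tracking the diagonal correction and the multiplicities—rather than any conceptual difficulty.

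Assembling $S_3=2\bigl[2\binom{m}{2}+(R^*(G)-(n-1))-(m-1)(n-1)\bigr]$ and simplifying the constants (the three integer terms telescope to $2m(m-n)$) should give $S_3=2R^*(G)+2m(m-n)$. Adding $S_1+S_2+S_3$ then collects $R^+(G)$ with coefficient $4$, $R^*(G)$ with coefficient $4$, and leaves the constant $(m+n)(m-n+1)+2m(m-n)$, which is exactly the claimed formula. I would leave the final constant unsimplified to match the statement, and I would double-check each partial total against small cases (for instance $S(K_2)=P_3$) to guard against arithmetic slips in the Case~3 symmetrization.
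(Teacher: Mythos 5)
Your proposal is correct and follows essentially the same route as the paper: the identical three-block partition of pairs over $V\cup V'$, reduction via Lemma~\ref{sg}, the edge-incidence identity $\sum_{kl\in E}(\Omega_{ki}+\Omega_{li})=\sum_{v}d_v\Omega_{vi}$, Foster's formula, and the handshake lemma, with all partial totals ($2R^+(G)$, $2R^+(G)+2R^*(G)+(m+n)(m-n+1)$, and $2R^*(G)+2m(m-n)$) agreeing with the paper's. The only difference is that you derive from scratch the sums $\sum_{i\in V'}\sum_{j\in V}\Omega^S_{ij}$ and $\sum_{\{i,j\}\subseteq V'}\Omega^S_{ij}$ (your diagonal-correction bookkeeping for the $V'$--$V'$ block is right), whereas the paper simply imports these two partial sums from \cite{yang}.
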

\begin{proof}
By the definition of the additive degree-Kirchhoff index, we have
\begin{align}\label{e5}
R^+(S(G))=&\sum_{\{i,j\}\subseteq (V\cup V')}(d_i^S+d_j^S)\Omega^S_{ij}\notag\\
=&\sum_{\{i,j\}\subseteq V}(d_i^S+d_j^S)\Omega^S_{ij}+\sum_{i\in V'}\sum_{j\in V}(d_i^S+d_j^S)\Omega^S_{ij}+\sum_{\{i,j\}\subseteq V'}(d_i^S+d_j^S)\Omega^S_{ij}
\end{align}
Now we compute the three parts on the right side of the above equation.

\textbf{(1).} For the first part, by Lemma \ref{sg}, it is easily seen that
\begin{equation}\label{e12}
\sum_{\{i,j\}\subseteq V}(d_i^S+d_j^S)\Omega^S_{ij}=2\sum_{\{i,j\}\subseteq V}(d_i+d_j)\Omega_{ij}=2R^+(G).
\end{equation}

\textbf{(2).} For the second part, noticing that $d_i^S=2$ for $i\in V'$, one has
\begin{equation}\label{e13}
\sum_{i\in V'}\sum_{j\in V}(d_i^S+d_j^S)\Omega^S_{ij}=\sum_{i\in V'}\sum_{j\in V}(2+d_j)\Omega^S_{ij}=2\sum_{i\in V'}\sum_{j\in V}\Omega^S_{ij}+\sum_{i\in V'}\sum_{j\in V}d_j\Omega^S_{ij}.
\end{equation}
For the $\sum\limits_{i\in V'}\sum\limits_{j\in V}\Omega^S_{ij}$ term,
$$\sum_{i\in V'}\sum_{j\in V}\Omega^S_{ij}=\sum_{i\in V'}\sum_{j\in \Gamma(i)}\Omega^S_{ij}+\sum_{i\in V'}\sum_{j\in V\setminus\Gamma(i)}\Omega^S_{ij},$$
while \cite{yang} has proved that
$\sum\limits_{i\in V'}\sum\limits_{j\in \Gamma(i)}\Omega^S_{ij}=m+n-1$ and $\sum\limits_{i\in V'}\sum\limits_{j\in V\setminus\Gamma(i)}\Omega^S_{ij}=\frac{m(n-2)}{2}+R^+(G)-\frac{(n+2)(n-1)}{2}$, so that
\begin{equation}\label{e14}
\sum\limits_{i\in V'}\sum\limits_{j\in V}\Omega^S_{ij}=R^+(G)+\frac{mn-n^2+n}{2}.
\end{equation}

For the last $\sum\limits_{i\in V'}\sum\limits_{j\in V}d_j\Omega^S_{ij}$ term of (\ref{e13}), we combine Lemma \ref{sg} and Foster's formula, to give
\begin{align}\label{e15}
&\sum_{i\in V'}\sum_{j\in V}d_j\Omega^S_{ij}=d_j\times\frac{1+2\Omega_{kj}+2\Omega_{lj}-\Omega_{kl}}{2}\notag\\
=&\frac{1}{2}\sum_{i\in V'}\sum_{j\in V}d_j+\sum_{i\in V'}\sum_{j\in V}d_j(\Omega_{kj}+\Omega_{lj})-\frac{1}{2}\sum_{i\in V'}\sum_{j\in V}d_j\Omega_{kl}\notag\\
=&\frac{1}{2}\sum_{i\in V'}(2m)+\sum_{i\in V'}\sum_{j\in V}d_j(\Omega_{kj}+\Omega_{lj})-\frac{1}{2}\sum_{i\in V'}(2m\Omega_{kl})\notag\\
=&m^2+\sum_{i\in V'}\sum_{j\in V}d_j(\Omega_{kj}+\Omega_{lj})-m(n-1).
\end{align}
The only thing left is to compute $\sum\limits_{i\in V'}\sum\limits_{j\in V}d_j(\Omega_{kj}+\Omega_{lj})$ in Eq. (\ref{e15}). Since every vertex $i\in V'$ corresponds to an edge $kl\in E(G)$ (with $\Gamma(i)=\{k,l\}$), it follows that
\begin{align}\label{e16}
\sum\limits_{i\in V'}\sum\limits_{j\in V}d_j(\Omega_{kj}+\Omega_{lj})&=\sum\limits_{kl\in E}\sum\limits_{j\in V}d_j(\Omega_{kj}+\Omega_{lj})=\sum\limits_{j\in V}d_j\sum\limits_{kl\in E}(\Omega_{kj}+\Omega_{lj})\notag\\
&=\sum\limits_{j\in V}d_j\sum\limits_{u\in V}d_u\Omega_{uj}=\sum\limits_{j\in V}\sum\limits_{u\in V}d_jd_u\Omega_{ju}\notag\\
&=2R^*(G).
\end{align}
Substituting Eq. (\ref{e16}) back into Eq. (\ref{e15}), simple calculation leads to
\begin{equation}\label{e17}
\sum_{i\in V'}\sum_{j\in V}d_j\Omega^S_{ij}=2R^*(G)+m^2-m(n-1).
\end{equation}

Finally, substituting Eqs. (\ref{e17}) and (\ref{e14}) back into Eq. (\ref{e13}), we have
\begin{equation}\label{f1}
\sum_{i\in V'}\sum_{j\in V}(d_i+d_j)\Omega^S_{ij}=2R^+(G)+2R^*(G)+(m+n)(m-n+1).
\end{equation}

\textbf{(3).} For the third part of (\ref{e5}), we use a result of \cite{yang} that
\begin{align}\label{e18}
\sum_{\{i,j\}\subseteq V'}\Omega^S_{ij}&=\frac{m(m-1)}{2}+\frac{R^*(G)-(n-1)}{2}-\big[\frac{(m-2)(n-1)}{4}+\frac{m(n-1)}{4}\big]\notag\\
&=\frac{R^*(G)}{2}+\frac{m(m-n)}{2}.
\end{align}
Noticing that for $i\in V'$, $d_i^S=2$, it is seen that
\begin{equation}\label{e19}
\sum_{\{i,j\}\subseteq V'}(d_i^S+d_j^S)\Omega^S_{ij}=4\sum_{\{i,j\}\subseteq V'}\Omega^S_{ij}=2R^*(G)+2m(m-n).
\end{equation}

Substitution of the three parts (\ref{e12}), (\ref{f1}), and (\ref{e19}) into (\ref{e5}) yields the desired result.
\end{proof}

We proceed to obtain $R^*(S(G))$.
\begin{thm}\label{s*}
Let $G$ be a connected graph with $n\geq 2$ vertices and $m$ edges. Then
\begin{equation}\label{e}
R^*(S(G))=8R^*(G)+2m(2m-2n+1).
\end{equation}
\end{thm}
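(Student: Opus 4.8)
The plan is to mirror the three-part decomposition used in the proof of Theorem~\ref{s+}, splitting the sum $R^*(S(G))=\sum_{\{i,j\}\subseteq V\cup V'}d_i^S d_j^S\Omega^S_{ij}$ according to whether each of $i,j$ lies in $V$ or in $V'$. The advantage over the additive case is that the multiplicative weights degenerate more simply: for $i\in V'$ we have $d_i^S=2$, so every term involving an inserted vertex carries a constant factor, which makes the mixed and pure-$V'$ contributions directly proportional to sums already computed.

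First I would handle the part with both $i,j\in V$. Here $d_i^S=d_i$ and $d_j^S=d_j$, and part~(1) of Lemma~\ref{sg} gives $\Omega^S_{ij}=2\Omega_{ij}$, so this contribution is immediately $2\sum_{\{i,j\}\subseteq V}d_id_j\Omega_{ij}=2R^*(G)$.

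The key shortcut lies in the mixed part with $i\in V'$ and $j\in V$. Since $d_i^S d_j^S=2d_j$, this sum equals $2\sum_{i\in V'}\sum_{j\in V}d_j\Omega^S_{ij}$, which is exactly twice the quantity already evaluated in Eq.~(\ref{e17}) during the proof of Theorem~\ref{s+}. Thus I may read off its value as $2\big(2R^*(G)+m^2-m(n-1)\big)$ without redoing the Foster's-formula manipulation that reduced $\sum_{i\in V'}\sum_{j\in V}d_j(\Omega_{kj}+\Omega_{lj})$ to $2R^*(G)$. For the third part, both $i,j\in V'$, so $d_i^S d_j^S=4$ and the contribution is $4\sum_{\{i,j\}\subseteq V'}\Omega^S_{ij}$; substituting the value recorded in Eq.~(\ref{e18}) gives $2R^*(G)+2m(m-n)$.

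Adding the three parts collects the $R^*(G)$ coefficient as $2+4+2=8$ and the remaining polynomial as $2m^2-2m(n-1)+2m(m-n)=2m(2m-2n+1)$, which yields the claimed formula. I expect no genuine obstacle here: all the analytic content—particularly the reduction of the mixed double sum via Foster's formula—has already been carried out for Theorem~\ref{s+}, so the present proof is essentially bookkeeping. The only point requiring care is correctly identifying the mixed part as $2\times$Eq.~(\ref{e17}) and the pure-$V'$ part as $4\times$Eq.~(\ref{e18}), rather than re-expanding either sum from scratch.
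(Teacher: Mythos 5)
Your proposal is correct and follows exactly the paper's own argument: the same three-way split of the sum by membership of $i,j$ in $V$ or $V'$, with the $V$--$V$ part reduced via Lemma \ref{sg}(1), the mixed part read off as twice Eq.~(\ref{e17}), and the $V'$--$V'$ part as four times Eq.~(\ref{e18}) (i.e.\ Eq.~(\ref{e19})). The final bookkeeping also matches the paper's computation.
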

\begin{proof}
By the definition of the multiplicative degree-Kirchhoff index, we have
\begin{align*}
R^*(S(G))&=\sum_{\{i,j\}\subseteq V\cup V'}d_i^Sd_j^S\Omega^S_{ij}=\sum_{\{i,j\}\subseteq V}d_i^Sd_j^S\Omega^S_{ij}+\sum_{i\in V'}\sum_{j\in V}d_i^Sd_j^S\Omega^S_{ij}+\sum_{\{i,j\}\subseteq V'}d_i^Sd_j^S\Omega^S_{ij}\notag\\
&=\sum_{\{i,j\}\subseteq V}d_id_j\Omega^S_{ij}+2\sum_{i\in V'}\sum_{j\in V}d_j\Omega^S_{ij}+4\sum_{\{i,j\}\subseteq V'}\Omega^S_{ij}.
\end{align*}
By Lemma \ref{sg}, Eqs. (\ref{e17}) and (\ref{e19}), this gives
\begin{align*}
R^*(S(G))&=2\sum_{\{i,j\}\subseteq V}d_id_j\Omega_{ij}+2[2R^*(G)+m^2-m(n-1)]+[2R^*(G)+2m(m-n)]\\
&=2R^*(G)+2[2R^*(G)+m^2-m(n-1)]+[2R^*(G)+2m(m-n)]\\
&=8R^*(G)+2m(2m-2n+1),
\end{align*}
as required.
\end{proof}
\section{Formulae for our Kirchhoffian invariants of iterated subdivisions}
Set $S^0(G)=G$, $S^k(G)=S(S^{k-1}(G))$, $k=1,2,\ldots.$ In this section, in the proofs, denote by $n_k$ and $m_k$ the numbers of vertices and edges in $S^k(G)$, respectively. Then it is easily seen that $n_0=n$, $m_0=m$, and for $k\geq 1$,
\begin{equation}
n_k=n+(2^k-1)m,~~m_k=2^km.
\end{equation}
In what follows, we give formulae for these graph invariants of the iterated subdivision $S^k(G)$.

The following lemma for solving a certain kind of recursion relation, is useful.
\begin{Lemma}\cite{yyz}\label{lema}
Let $\{y_k\}_{k\geq 0}$, $\{f_k\}_{k\geq 0}$, and $\{g_k\}_{k\geq 0}$ be three sequences satisfying the recurrence relation:
$$y_{k+1}=f_ky_k+g_k, ~~~k\geq 0.$$
Then
$$y_{k+1}=(y_0+\sum_{i=0}^{k}h_i)\prod_{j=0}^{k}f_j,$$
where $h_k=s_{k+1}g_k$, $s_{k+1}=(\prod\limits_{i=0}^{k}f_i)^{-1}$, and $s_0=1$.
\end{Lemma}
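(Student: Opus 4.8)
The plan is to recognize this as a first-order linear recurrence and to solve it by the discrete analogue of the integrating-factor method; the definitions of $s_{k+1}$ and $h_k$ are evidently tailored to implement exactly this. The single observation that makes everything collapse is the telescoping identity
$$ s_{k+1} f_k = s_k, $$
which is immediate from $s_{k+1} = \big(\prod_{i=0}^{k} f_i\big)^{-1}$ by cancelling the factor $f_k$ against the product. Everything else is bookkeeping around this relation.

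Concretely, first I would multiply the given recurrence $y_{k+1} = f_k y_k + g_k$ through by the summation factor $s_{k+1}$. Applying the telescoping identity to the first term on the right and the definition $h_k = s_{k+1} g_k$ to the second, the recurrence becomes
$$ s_{k+1} y_{k+1} = s_k y_k + h_k. $$
Setting $z_k := s_k y_k$, this reads $z_{k+1} = z_k + h_k$, a pure first difference. Summing from $0$ to $k$ (telescoping) and using $z_0 = s_0 y_0 = y_0$, since $s_0 = 1$, gives
$$ s_{k+1} y_{k+1} = y_0 + \sum_{i=0}^{k} h_i. $$
Finally I would divide by $s_{k+1}$, equivalently multiply by $\prod_{j=0}^{k} f_j = s_{k+1}^{-1}$, to recover the stated closed form
$$ y_{k+1} = \Big( y_0 + \sum_{i=0}^{k} h_i \Big) \prod_{j=0}^{k} f_j. $$

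Since this is a standard recurrence-solving identity, I do not expect any deep obstacle; the only place to be careful is the index bookkeeping, in particular the empty-product conventions at $k=0$, where $s_1 = f_0^{-1}$ and the formula must reproduce $y_1 = f_0 y_0 + g_0$. If one prefers to avoid the summation-factor viewpoint, the identity can instead be verified directly by induction on $k$: the inductive step reduces to checking that $h_k \prod_{j=0}^{k} f_j = g_k$, which is once more just the cancellation built into the definitions of $h_k$ and $s_{k+1}$. I would present the summation-factor derivation as the main argument, because it explains where the auxiliary sequences $s_{k+1}$ and $h_k$ come from, and mention the inductive check as a quick alternative verification.
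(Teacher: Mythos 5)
Your argument is correct and complete: the identity $s_{k+1}f_k = s_k$ does all the work, the telescoping sum for $z_k = s_k y_k$ is valid, and the base case $z_0 = y_0$ follows from $s_0=1$. The paper itself gives no proof of this lemma---it is quoted from the cited reference---so there is nothing to compare against; your summation-factor derivation is the standard one and would serve as a self-contained proof, with the only implicit hypothesis worth flagging being that each $f_i$ is nonzero (automatic in all of the paper's applications, where $f_k$ is a positive constant).
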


Now we are ready to give formulae for these graph invariants of $S^k(G)$. We begin with the multiplicative degree-Kirchhoff index, as it will be used in obtaining the formulas for the additive degree-Kirchhoff index and the Kirchhoff index.
\begin{thm}\label{sk*}
Let $G$ be a connected graph with $n\geq 2$ vertices and $m$ edges. Then
\begin{equation}\label{e}
R^*(S^k(G))=8^kR^*(G)+\frac{8^k-2^k}{3}m(2m-2n+1).
\end{equation}
\end{thm}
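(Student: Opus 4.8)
The plan is to convert the one-step formula of Theorem~\ref{s*} into a recursion in the iteration index $k$ and then solve it with Lemma~\ref{lema}. Writing $R^*_k := R^*(S^k(G))$ and applying Theorem~\ref{s*} to the graph $S^{k-1}(G)$, which has $n_{k-1}$ vertices and $m_{k-1}$ edges, I obtain
$$R^*_k = 8R^*_{k-1} + 2m_{k-1}\bigl(2m_{k-1}-2n_{k-1}+1\bigr),\qquad k\geq 1,$$
with initial value $R^*_0 = R^*(G)$.

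First I would simplify the inhomogeneous term using $n_k = n+(2^k-1)m$ and $m_k = 2^k m$. The crucial observation is that
$$2m_{k-1}-2n_{k-1}+1 = 2^k m - 2\bigl(n+(2^{k-1}-1)m\bigr)+1 = 2m-2n+1,$$
which is \emph{independent} of $k$. Consequently $2m_{k-1}(2m_{k-1}-2n_{k-1}+1) = 2^k m(2m-2n+1)$, and the recursion collapses to the clean form
$$R^*_k = 8R^*_{k-1} + 2^k m(2m-2n+1).$$

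Next I would apply Lemma~\ref{lema} with $y_k = R^*_k$, the constant multiplier $f_k = 8$, and forcing term $g_k = 2^{k+1}m(2m-2n+1)$. Then $\prod_{j=0}^{k}f_j = 8^{k+1}$, so $s_{k+1} = 8^{-(k+1)}$ and
$$h_k = s_{k+1}g_k = 8^{-(k+1)}\cdot 2^{k+1}m(2m-2n+1) = 4^{-(k+1)}m(2m-2n+1).$$
Summing the resulting geometric series gives $\sum_{i=0}^{k}h_i = \tfrac{1}{3}\bigl(1-4^{-(k+1)}\bigr)m(2m-2n+1)$; multiplying through by $\prod_{j=0}^{k}f_j = 8^{k+1}$ and using $8^{k+1}\cdot 4^{-(k+1)} = 2^{k+1}$ yields $R^*_{k+1} = 8^{k+1}R^*(G) + \tfrac{1}{3}(8^{k+1}-2^{k+1})m(2m-2n+1)$, and reindexing produces the claimed formula.

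I expect the only subtle point to be the first simplification: recognizing that $2m_{k-1}-2n_{k-1}+1$ is a $k$-independent constant. Once this is secured, the recursion has a constant multiplier and a purely geometric forcing term, so Lemma~\ref{lema} applies directly and the rest is a routine geometric summation. A quick sanity check at $k=0$ (both sides equal $R^*(G)$) and at $k=1$ (recovering Theorem~\ref{s*}) confirms the algebra.
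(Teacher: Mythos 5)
Your proposal is correct and follows essentially the same route as the paper: both convert Theorem~\ref{s*} into the recursion $y_{k+1}=8y_k+g_k$, observe (explicitly in your case, implicitly in the paper's) that $2m_k-2n_k+1=2m-2n+1$ so the forcing term is $2^{k+1}m(2m-2n+1)$, and then solve via Lemma~\ref{lema} and a geometric sum. The algebra checks out, including the final reindexing.
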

\begin{proof}
Let $n_k$ and $m_k$ be defined as before with $S^k(G)$ a graph of $n_k$ vertices and $m_k$ edges. By Theorem \ref{s*},
$$R^*(S^k(G))=8R^*(S^{k-1}(G))+2m_{k-1}(2m_{k-1}-2n_{k-1}+1),~~k\geq 1.$$
Set $y_k=R^*(S^k(G))$, $f_k=8$, $g_k=2m_k(2m_k-2n_k+1)$, $k=0,1,2,\ldots$. Then we have
$$y_{k+1}=f_ky_k+g_k,~~y_0=R^*(G),~~k\geq 0.$$
By Lemma \ref{lema},
$$y_{k+1}=(y_0+\sum_{i=0}^{k}h_i)\prod_{j=0}^{k}f_j,$$
where $s_0=1$, and for $k\geq 0$, $s_{k+1}=(\prod\limits_{j=0}^{k}f_j)^{-1}=(\prod\limits_{j=0}^{k}8)^{-1}=\frac{1}{8^{k+1}}$, and
$$h_k=s_{k+1}g_k=\frac{2m_k(2m_k-2n_k+1)}{8^{k+1}}=\frac{2\times 2^km(2m-2n+1)}{8^{k+1}}=\frac{2^{k+1}m(2m-2n+1)}{8^{k+1}}.$$
Hence
\begin{align*}
y_{k+1}&=[y_0+\sum_{i=0}^{k}\frac{2^{i+1}m(2m-2n+1)}{8^{i+1}}]\prod_{j=0}^{k}8\\
&=[R^*(G)+m(2m-2n+1)\sum_{i=0}^{k}(\frac{1}{4})^{i+1}]8^{k+1}\\
&=[R^*(G)+m(2m-2n+1)\frac{\frac{1}{4}-(\frac{1}{4})^{k+2}}{1-\frac{1}{4}}]8^{k+1}\\
&=8^{k+1}R^*(G)+\frac{8^{k+1}-2^{k+1}}{3}m(2m-2n+1).
\end{align*}
As $R(S^k(G))=y_k$, the proof is completed.
\end{proof}

Next, making use of the result of $R^*(S^k(G))$, we compute $R^+(S^k(G))$.

\begin{thm}\label{sk+}
Let $G$ be a connected graph with $n\geq 2$ vertices and $m$ edges. Then
\begin{equation}
R^+(S^k(G))=4^kR^+(G)+(8^k-4^k)R^*(G)+\frac{8^k-2^k}{3}m(2m-2n+1)-\frac{4^k-1}{3}(m-n)(m-n+1).
\end{equation}
\end{thm}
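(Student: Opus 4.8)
The plan is to convert the one-step formula of Theorem \ref{s+} into a first-order linear recurrence and solve it with Lemma \ref{lema}, just as was done for $R^*(S^k(G))$ in Theorem \ref{sk*}. Applying Theorem \ref{s+} to the connected graph $S^{k-1}(G)$, which has $n_{k-1}$ vertices and $m_{k-1}$ edges, gives
$$R^+(S^k(G))=4R^+(S^{k-1}(G))+4R^*(S^{k-1}(G))+(m_{k-1}+n_{k-1})(m_{k-1}-n_{k-1}+1)+2m_{k-1}(m_{k-1}-n_{k-1})$$
for $k\geq 1$. Setting $y_k=R^+(S^k(G))$, $f_k=4$, and $g_k=4R^*(S^k(G))+(m_k+n_k)(m_k-n_k+1)+2m_k(m_k-n_k)$, this is exactly the form $y_{k+1}=f_ky_k+g_k$ with $y_0=R^+(G)$, to which Lemma \ref{lema} applies.

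The decisive preliminary step is to simplify $g_k$. Since $m_k=2^km$ and $n_k=n+(2^k-1)m$, one has $m_k-n_k=m-n$ for every $k$, and $m_k+n_k=(2^{k+1}-1)m+n$. Feeding these in and using the elementary identity $m[(m-n+1)+(m-n)]=m(2m-2n+1)$, the two polynomial terms collapse to $2^{k+1}m(2m-2n+1)-(m-n)(m-n+1)$, where the residual $(n-m)(m-n+1)$ has been rewritten as $-(m-n)(m-n+1)$. I would then substitute the closed form $R^*(S^k(G))=8^kR^*(G)+\tfrac{8^k-2^k}{3}m(2m-2n+1)$ from Theorem \ref{sk*}. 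Writing $A=R^*(G)$, $B=m(2m-2n+1)$, and $P=(m-n)(m-n+1)$, the source term reduces to the clean expression $g_k=4\cdot 8^kA+\big(\tfrac{4}{3}8^k+\tfrac{2}{3}2^k\big)B-P$, in which the polynomial data of $g_k$ has merged into the same shape $B$ that already carries the $R^*$ contribution.

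Finally I would run Lemma \ref{lema} with $\prod_{j=0}^{k}f_j=4^{k+1}$, so $s_{k+1}=4^{-(k+1)}$ and $h_i=g_i/4^{i+1}=2^iA+\big(\tfrac13 2^i+\tfrac16(1/2)^i\big)B-\tfrac14(1/4)^iP$. Summing over $i=0,\dots,k$ reduces to the three geometric series $\sum 2^i=2^{k+1}-1$, $\sum(1/2)^i=2-2^{-k}$, and $\sum(1/4)^i=\tfrac43\big(1-(1/4)^{k+1}\big)$; after forming $(y_0+\sum_{i=0}^k h_i)\cdot 4^{k+1}$ and using $2^{k+1}4^{k+1}=8^{k+1}$ and $2^{-k}4^{k+1}=2^{k+2}$, the coefficients collapse to give $y_{k+1}=4^{k+1}R^+(G)+(8^{k+1}-4^{k+1})R^*(G)+\tfrac{8^{k+1}-2^{k+1}}{3}B-\tfrac{4^{k+1}-1}{3}P$, and the shift $k+1\mapsto k$ yields the stated formula. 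The main obstacle is entirely the algebra of the previous paragraph: without the collapse $m(m-n+1)+m(m-n)=m(2m-2n+1)$ the source term would carry four distinct polynomial shapes whose geometric sums would not visibly recombine into the final two terms, so the crux is recognizing this identity before invoking Lemma \ref{lema}. As consistency checks I would verify that the resulting expression returns $R^+(G)$ at $k=0$ and reproduces Theorem \ref{s+} at $k=1$.
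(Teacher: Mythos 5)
Your proposal is correct and follows essentially the same route as the paper: apply Theorem \ref{s+} to $S^{k-1}(G)$, set up the linear recurrence with $f_k=4$, substitute the closed form of $R^*(S^k(G))$ from Theorem \ref{sk*}, and solve via Lemma \ref{lema} by summing the resulting geometric series. Your observation that $m_k-n_k=m-n$ collapses the polynomial source term cleanly is exactly the simplification the paper performs (slightly less transparently) inside its computation of $h_k$, and all your intermediate coefficients check out.
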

\begin{proof}
By Theorem \ref{s+},
\begin{align*}
R^+({S^k(G)})=&4R^+(S^{k-1}(G))+4R^*(S^{k-1}(G))+(m_{k-1}+n_{k-1})(m_{k-1}-n_{k-1}+1)\\
&+2m_{k-1}(m_{k-1}-n_{k-1}),\quad k\geq 1.
\end{align*}
Set $y_k=R^+(S^k(G))$, $f_k=4$, $g_k=4R^*(S^{k}(G))+(m_{k}+n_{k})(m_{k}-n_{k}+1)$, $k=0,1,2,\ldots$. Then we have
$$y_{k+1}=f_ky_k+g_k,~~y_0=R^+(G),~~k\geq 0.$$
By Lemma \ref{lema},
$$y_{k+1}=(y_0+\sum_{i=0}^{k}h_i)\prod_{j=0}^{k}f_j,$$
where $s_0=1$, and for $k\geq 0$, $s_{k+1}=(\prod\limits_{j=0}^{k}f_j)^{-1}=(\prod\limits_{j=0}^{k}4)^{-1}=\frac{1}{4^{k+1}}$, and
\begin{align*}
h_k=s_{k+1}g_k=\frac{1}{4^{k+1}}[4R^*(S^{k}(G))+(m_{k}+n_{k})(m_{k}-n_{k}+1)+2m_{k}(m_{k}-n_{k})].
\end{align*}
By Theorem \ref{sk*},
\begin{align*}
h_k&=\frac{1}{4^{k+1}}\big\{4[8^kR^*(G)+\frac{8^k-2^k}{3}m(2m-2n+1)]+(m_{k}+n_{k})(m_{k}-n_{k}+1)+2m_{k}(m_{k}-n_{k})\\
&=\frac{1}{4^{k+1}}\big[4\cdot8^kR^*(G)+\frac{4(8^k-2^k)}{3}m(2m-2n+1)+(2^km+n+(2^k-1)m)(m-n+1)\\
&~~~~+2\cdot 2^km(m-n)\big]\\
&=2^kR^*(G)+\frac{8^k-2^k}{3\cdot4^k}m(2m-2n+1)+\frac{(2^{k+1}m-m+n)(m-n+1)}{4^{k+1}}+\frac{2^{k+1}m(m-n)}{4^{k+1}}\\
&=2^kR^*(G)+\frac{8^k-2^k}{3\cdot4^k}m(2m-2n+1)+\frac{2^{k+1}m(2m-2n+1)-(m-n)(m-n+1)}{4^{k+1}}\\
&=2^kR^*(G)+(\frac{2^k-(\frac{1}{2})^k}{3}+\frac{1}{2^{k+1}})m(2m-2n+1)-\frac{(m-n)(m-n+1)}{4^{k+1}}.
\end{align*}
Hence
\begin{align*}
&y_{k+1}=(y_0+\sum_{i=0}^{k}h_i)\prod_{j=0}^{k}f_j\\
&=\big\{y_0+\sum_{i=0}^{k}\big[2^iR^*(G)+(\frac{2^i-(\frac{1}{2})^i}{3}+\frac{1}{2^{i+1}})m(2m-2n+1)-\frac{(m-n)(m-n+1)}{4^{i+1}}\big]\big\}\prod_{j=0}^{k}4\\
&=4^{k+1}\big[R^+(G)+(2^{k+1}-1)R^*(G)+\frac{2^{k+1}-(\frac{1}{2})^{k+1}}{3}m(2m-2n+1)\\
&~~~~-\frac{1-(\frac{1}{4})^{k+1}}{3}(m-n)(m-n+1)\big]\\
&=4^{k+1}R^+(G)+(8^{k+1}-4^{k+1})R^*(G)+\frac{8^{k+1}-2^{k+1}}{3}m(2m-2n+1)\\
&~~~~-\frac{4^{k+1}-1}{3}(m-n)(m-n+1).
\end{align*}
Thus the result follows.
\end{proof}

Finally, we compute $R(S^k(G))$ using the results of Theorems \ref{sk*} and \ref{sk+}, still along with Lemma \ref{lema}.

\begin{thm}
Let $G$ be a connected graph with $n\geq 2$ vertices and $m$ edges. Then
\begin{align}
R(S^k(G))=&2^kR(G)+\frac{4^{k}-2^{k}}{2}R^+(G)+\frac{8^{k}-2\cdot 4^{k}+2^{k}}{4}R^*(G)\notag\\
&+\frac{8^k-2^k}{12}m(2m-2n+1)-\frac{4^k-1}{6}(m-n)(m-n+1).
\end{align}
\end{thm}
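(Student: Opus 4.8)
The plan is to exploit Theorem~\ref{main} to write $R(S^k(G))$ as a first-order linear recursion in $k$ whose inhomogeneous term is already available in closed form, and then to invoke Lemma~\ref{lema} exactly as in the proofs of Theorems~\ref{sk*} and~\ref{sk+}. Applying Theorem~\ref{main} to the graph $S^{k-1}(G)$, which has $n_{k-1}$ vertices and $m_{k-1}$ edges, gives for $k\geq 1$
$$R(S^k(G))=2R(S^{k-1}(G))+R^+(S^{k-1}(G))+\tfrac12 R^*(S^{k-1}(G))+\frac{m_{k-1}^2-n_{k-1}^2+n_{k-1}}{2}.$$
Hence, setting $y_k=R(S^k(G))$, $f_k=2$, and
$$g_k=R^+(S^k(G))+\tfrac12 R^*(S^k(G))+\frac{m_k^2-n_k^2+n_k}{2},$$
we obtain $y_{k+1}=f_ky_k+g_k$ with $y_0=R(G)$, precisely the form required by Lemma~\ref{lema}.

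First I would simplify the purely numerical part of $g_k$. Substituting $m_k=2^km$ and $n_k=n+(2^k-1)m$ and expanding, the cross terms cancel so that
$$m_k^2-n_k^2+n_k=2^k\,m(2m-2n+1)-(m-n)(m-n+1),$$
a clean split into a growing piece and a constant piece. Next I would insert the closed forms of $R^+(S^k(G))$ and $R^*(S^k(G))$ from Theorems~\ref{sk+} and~\ref{sk*}. Collecting the coefficients of $R^+(G)$, $R^*(G)$, $m(2m-2n+1)$, and $(m-n)(m-n+1)$, the term $g_k$ becomes a linear combination of $4^k$, $8^k$, $2^k$, and a constant, the $R^*(G)$-coefficient being assembled from the $(8^k-4^k)$ inside $R^+(S^k(G))$ and the $\tfrac12\cdot 8^k$ inside $\tfrac12 R^*(S^k(G))$.

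Then I would run the Lemma~\ref{lema} machinery: with $s_{k+1}=(\prod_{j=0}^k f_j)^{-1}=2^{-(k+1)}$, form $h_k=s_{k+1}g_k$, which produces only the geometric ratios $4^i$, $2^i$, $1$, and $(1/2)^i$. Summing $\sum_{i=0}^k h_i$ via $\sum_{i=0}^k r^i=(r^{k+1}-1)/(r-1)$ and then multiplying by $\prod_{j=0}^k f_j=2^{k+1}$, together with the $y_0=R(G)$ contribution, yields the stated formula after replacing $k$ by $k+1$.

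The routine but delicate step, and the main obstacle, is the final bookkeeping. One must check that the two contributions to the $R^*(G)$-coefficient combine to exactly $\frac{8^{k}-2\cdot 4^{k}+2^{k}}{4}$, and that the $(1/2)^i$-tail arising from the constant part of $g_k$ merges with the $2^i$-sum so that, after scaling by $2^{k+1}$, the $(m-n)(m-n+1)$-coefficient collapses to precisely $-\frac{4^{k+1}-1}{6}$ (the $(1/2)^{k+1}$ there, times $2^{k+1}$, supplying the lone $-1$). No genuinely new idea is needed beyond the simplification above and the careful collection of exponential terms.
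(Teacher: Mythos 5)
Your proposal follows essentially the same route as the paper: the recursion from Theorem~\ref{main} with $f_k=2$ and $g_k=R^+(S^k(G))+\tfrac12R^*(S^k(G))+\tfrac{m_k^2-n_k^2+n_k}{2}$, substitution of Theorems~\ref{sk*} and~\ref{sk+}, and Lemma~\ref{lema} with geometric-series summation; your explicit simplification $m_k^2-n_k^2+n_k=2^km(2m-2n+1)-(m-n)(m-n+1)$ is correct and is exactly what the paper uses implicitly when it collects $g_k$ into the basis $\{4^k,8^k,2^k,1\}$. The bookkeeping you flag (the $R^*(G)$-coefficient $\tfrac32\cdot 8^k-4^k$ in $g_k$ and the $(1/2)^{k+1}\cdot 2^{k+1}$ term supplying the $-1$ in $-\tfrac{4^{k+1}-1}{6}$) checks out against the paper's computation.
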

\begin{proof}
By Theorem \ref{main}, for $k\geq 1$,
\begin{align*}
R({S^k(G)})=2R(S^{k-1}(G))+R^+(S^{k-1}(G))+\frac{1}{2}R^*(S^{k-1}(G))+\frac{m^2_{k-1}-n^2_{k-1}+n_{k-1}}{2}.
\end{align*}
Set $y_k=R(S^k(G))$, $f_k=2$, $g_k=R^+(S^{k}(G))+\frac{1}{2}R^*(S^{k}(G))+\frac{m^2_{k}-n^2_{k}+n_{k}}{2}$, $k=0,1,2,\ldots$. Then we have
$$y_{k+1}=f_ky_k+g_k,~~y_0=R^+(G),~~k\geq 0.$$
By Theorems \ref{sk*} and \ref{sk+}, and bearing in mind that $m_k=2^km$ and $n_k=n+(2^k-1)m$, we have
\begin{align*}
&g_k=R^+(S^{k}(G))+\frac{1}{2}R^*(S^{k}(G))+\frac{m^2_{k}-n^2_{k}+n_{k}}{2}\\
&=[4^kR^+(G)+(8^k-4^k)R^*(G)+\frac{8^k-2^k}{3}m(2m-2n+1)-\frac{4^k-1}{3}(m-n)(m-n+1)]\\
&~~+\frac{1}{2}[8^kR^*(G)+\frac{8^k-2^k}{3}m(2m-2n+1)]+\frac{(2^km)^2-[n+(2^k-1)m]^2+n+(2^k-1)m}{2}\\
&=4^kR^+(G)+(\frac{3}{2}8^k-4^k)R^*(G)+\frac{8^k}{2}m(2m-2n+1)-(\frac{4^k}{3}+\frac{1}{6})(m-n)(m-n+1).
\end{align*}
Then by Lemma \ref{lema},
$$y_{k+1}=(y_0+\sum_{i=0}^{k}h_i)\prod_{j=0}^{k}f_j,$$
where $s_0=1$ and for $k\geq 0$, $s_{k+1}=(\prod\limits_{j=0}^{k}f_j)^{-1}=(\prod\limits_{j=0}^{k}2)^{-1}=\frac{1}{2^{k+1}}$, and
\begin{align*}
&h_k=s_{k+1}g_k\\
&=\frac{1}{2^{k+1}}[4^kR^+(G)+(\frac{3}{2}8^k-4^k)R^*(G)+\frac{8^k}{2}m(2m-2n+1)-(\frac{4^k}{3}+\frac{1}{6})(m-n)(m-n+1)]\\
&=2^{k-1}R^+(G)+(3\cdot4^{k-1}-2^{k-1})R^*(G)+4^{k-1}m(2m-2n+1)\\
&~~~~-\frac{2^{k-1}+(\frac{1}{2})^{k+2}}{3}(m-n)(m-n+1).
\end{align*}
Hence
\begin{align*}
&y_{k+1}=(y_0+\sum_{i=0}^{k}h_i)\prod_{j=0}^{k}f_j\\
&=2^{k+1}\big\{R^+(G)+\sum_{i=0}^{k}[2^{i-1}R^+(G)+(3\cdot4^{i-1}-2^{i-1})R^*(G)+4^{i-1}m(2m-2n+1)\\
&~~~~-\frac{2^{i-1}+(\frac{1}{2})^{i+2}}{3}(m-n)(m-n+1)]\big\}\\
&=2^{k+1}\big\{R^+(G)+\frac{2^{k+1}-1}{2}R^+(G)+(4^k-\frac{1}{4}-2^k+\frac{1}{2})R^*(G)+\frac{4^k-\frac{1}{4}}{3}m(2m-2n+1)\\
&~~~~-\frac{2^{k}-\frac{1}{2}+\frac{1}{2}-(\frac{1}{2})^{k+2}}{3}(m-n)(m-n+1)]\big\}\\
&=2^{k+1}R^+(G)+\frac{4^{k+1}-2^{k+1}}{2}R^+(G)+\frac{8^{k+1}-2\cdot 4^{k+1}+2^{k+1}}{4}R^*(G)\\
&~~~~+\frac{8^{k+1}-2^{k+1}}{12}m(2m-2n+1)-\frac{4^{k+1}-1}{6}(m-n)(m-n+1).
\end{align*}
Then $R(S^k(G))$ is obtained by noticing that $R(S^k(G))=y_k$.
\end{proof}
\section{Formulae for $R(T(G))$, $R^+(T(G))$, and $R^*(T(G))$}
Resistance distances for $T(G)$ are obtained in \cite{hzb} as given in first lemma following. Since $T(G)$ can be obtained from $G$ by adding, for each edge $uv\in E$,  an additional vertex whose neighbors are $u$ and $v$, it follows that $T(G)$ has the same vertex set as $S(G)$ and can be written as $V(T(G))=V\cup V'$, where $V'$ denotes the set of newly added vertices. Consequently, it follows that $|V'|=|E|=m$ and $|V(T(G))|=n+m$. For clarity, we use $\Omega^T_{i,j}$ to denote resistance distances between vertices $i$ and $j$ in $T(G)$.
\begin{Lemma}\label{rgrs}
Resistance distances in $T(G)$ are given as:

(1) For $i,j\in V$,  $$\Omega_{ij}^T=\frac{2}{3}\Omega_{ij}.$$

(2) For $i\in V'$, $\Gamma(i)=\{k,l\}$, and $j\in V$,
$$\Omega_{ij}^T=\frac{1}{2}+\frac{1}{3}\Omega_{kj}+\frac{1}{3}\Omega_{lj}-\frac{1}{6}\Omega_{kl}.$$

(3) For $i,j\in V'$, $\Gamma(i)=\{k,l\}$, $\Gamma(j)=\{p,q\}$,
$$\Omega_{ij}^T=1+\frac{\Omega_{pk}+\Omega_{qk}+\Omega_{pl}+\Omega_{ql}-\Omega_{kl}-\Omega_{pq}}{6}.$$
\end{Lemma}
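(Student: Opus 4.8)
The plan is to treat the three cases in increasing order of difficulty, deriving (2) and (3) from (1) together with a single ``degree-two'' identity tailored to the fact that every vertex of $V'$ has exactly two neighbours, each joined to it by a unit resistor.

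First I would prove (1) by a series--parallel reduction. For each edge $kl\in E$ the subnetwork of $T(G)$ lying over that edge consists of the direct resistor $kl$ (resistance $1$) together with the two-edge path $k$--$w$--$l$ through the associated vertex $w\in V'$ (resistance $2$); since $w$ has degree $2$ and belongs to no other such gadget, this subnetwork meets the rest of $T(G)$ only at $k$ and $l$, so it may be replaced by its two-terminal equivalent $\tfrac{1\cdot 2}{1+2}=\tfrac23$ without altering any resistance among the vertices of $V$. Performing this replacement at every edge turns $T(G)$, as seen from $V$, into the graph $G$ with every edge given resistance $\tfrac23$; equivalently its Laplacian is $\tfrac32 L(G)$, whose group inverse is $\tfrac23 L^{\#}(G)$, and therefore $\Omega^T_{ij}=\tfrac23\Omega_{ij}$ for all $i,j\in V$. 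This is (1).

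Next I would establish the key tool: for $i\in V'$ with $\Gamma(i)=\{k,l\}$ and every vertex $j\neq i$,
\[
\Omega^T_{ij}=1+\tfrac12\big(\Omega^T_{kj}+\Omega^T_{lj}-\Omega^T_{ik}-\Omega^T_{il}\big).
\]
To see this, drive a unit current from $i$ to $j$ and let $V$ denote the resulting potential. Kirchhoff's current law at the degree-two vertex $i$ reads $2V_i-V_k-V_l=1$, so $\Omega^T_{ij}=V_i-V_j=\tfrac12+\tfrac12\big[(V_k-V_j)+(V_l-V_j)\big]$. On the other hand, the standard transfer (Green's-function) identity for a unit $i$--$j$ current gives $V_k-V_j=\tfrac12(\Omega^T_{kj}+\Omega^T_{ij}-\Omega^T_{ik})$ and likewise for $l$; substituting these and solving for $\Omega^T_{ij}$ yields the displayed identity. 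Crucially this identity holds for \emph{any} $j\neq i$, whether $j\in V$ or $j\in V'$, which is exactly what lets one formula serve for both (2) and (3).

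Then (2) follows by specialisation. Writing the identity for $j=k$ and for $j=l$ and subtracting the two instances gives $\Omega^T_{ik}=\Omega^T_{il}$; feeding this back into the $j=k$ instance (with $\Omega^T_{kk}=0$ and $\Omega^T_{kl}=\tfrac23\Omega_{kl}$ from (1)) pins down the local value $\Omega^T_{ik}=\Omega^T_{il}=\tfrac12+\tfrac16\Omega_{kl}$. Substituting this together with $\Omega^T_{kj}=\tfrac23\Omega_{kj}$ and $\Omega^T_{lj}=\tfrac23\Omega_{lj}$ collapses the identity to $\Omega^T_{ij}=\tfrac12+\tfrac13\Omega_{kj}+\tfrac13\Omega_{lj}-\tfrac16\Omega_{kl}$, which is (2). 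For (3) I would apply the same identity with $j\in V'$, $\Gamma(j)=\{p,q\}$: now $\Omega^T_{kj}$ and $\Omega^T_{lj}$ are themselves given by (2), while $\Omega^T_{ik}=\Omega^T_{il}=\tfrac12+\tfrac16\Omega_{kl}$ as just computed; after substitution the constants cancel and the $\Omega_{pq}$- and $\Omega_{kl}$-terms combine, leaving $\Omega^T_{ij}=1+\tfrac16(\Omega_{pk}+\Omega_{qk}+\Omega_{pl}+\Omega_{ql}-\Omega_{kl}-\Omega_{pq})$, which is (3). The main obstacle is entirely in the setup rather than the algebra: one must justify the two-terminal replacement in (1), and above all establish the degree-two identity rigorously via the transfer identity for the Laplacian, being careful with current directions and with its validity when $j$ is itself an inserted vertex. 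Once that identity is in hand, the self-consistency that determines $\Omega^T_{ik}$ and the uniform handling of (2) and (3) reduce the rest to routine substitution.
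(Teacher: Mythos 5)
Your argument is correct, and it is worth noting that the paper itself gives no proof of this lemma: it is quoted from Huang, Zhou and Bu \cite{hzb}, whose treatment of $T(G)$ goes through the group inverse $L^\#$ of the Laplacian. Your route is a self-contained, purely electrical alternative in the spirit of Chen and Zhang's proof of the analogous Lemma for $S(G)$: part (1) by the local series--parallel reduction (each edge gadget is a unit resistor in parallel with a two-edge path, giving $\tfrac{2}{3}$, and the inserted vertex is internal to the gadget so the Schur-complement elimination preserves all resistances among $V$), and parts (2) and (3) from the single degree-two identity $\Omega^T_{ij}=1+\tfrac12\big(\Omega^T_{kj}+\Omega^T_{lj}-\Omega^T_{ik}-\Omega^T_{il}\big)$, obtained by combining Kirchhoff's current law at $i$ with the transfer identity $V_k-V_j=\tfrac12\big(\Omega^T_{kj}+\Omega^T_{ij}-\Omega^T_{ik}\big)$ and solving for $\Omega^T_{ij}$. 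I checked the details: the $j=k$ and $j=l$ specializations do force $\Omega^T_{ik}=\Omega^T_{il}=\tfrac12+\tfrac16\Omega_{kl}$, the substitution into the identity reproduces (2) exactly (including the boundary cases $j\in\{k,l\}$), and feeding (2) back in for $j\in V'$ yields (3), also when the two edges share an endpoint. What your approach buys is transparency and independence from the unpublished manuscript; what it costs is only the care you already flag, namely justifying the two-terminal replacement and the validity of the degree-two identity for arbitrary $j\neq i$. One cosmetic point: you reuse the letter $V$ both for the vertex set and for the potential vector; rename the latter to avoid a clash with the paper's notation.
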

Comparing results in Lemma \ref{sg} with that in Lemma \ref{rgrs}, one immediately obtains:
\begin{pro}\label{pc}
Resistance distances in $S(G)$ and $T(G)$ relate as:

(1) For $i,j\in V$,  $$\Omega_{ij}^T=\frac{1}{3}\Omega_{ij}^S.$$

(2) For $i\in V'$, $j\in V$, $$\Omega_{ij}^T=\frac{1}{3}\Omega_{ij}^S+\frac{1}{3}.$$

(3) For $i,j\in V'$, $$\Omega_{ij}^T=\frac{1}{3}\Omega_{ij}^S+\frac{2}{3}.$$
\end{pro}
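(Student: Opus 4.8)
The plan is to prove each of the three relations by direct substitution, comparing the corresponding formulas for $\Omega_{ij}^S$ from Lemma \ref{sg} (with its extension (4) covering the case $j\in V$) against those for $\Omega_{ij}^T$ in Lemma \ref{rgrs}, and then verifying the stated linear relationship by elementary algebra. Since $S(G)$ and $T(G)$ share the same vertex partition $V\cup V'$, the three cases of the proposition correspond precisely to the three cases of each lemma, so the matching is immediate and no additional resistance-distance theory is needed.

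First I would treat part (1), where $i,j\in V$. Here Lemma \ref{sg}(1) gives $\Omega_{ij}^S=2\Omega_{ij}$ and Lemma \ref{rgrs}(1) gives $\Omega_{ij}^T=\tfrac{2}{3}\Omega_{ij}$, so eliminating $\Omega_{ij}$ between the two at once yields $\Omega_{ij}^T=\tfrac{1}{3}\Omega_{ij}^S$. Next, for part (2) with $i\in V'$, $\Gamma(i)=\{k,l\}$, and $j\in V$, I would take the subdivision value from the extended formula $\Omega_{ij}^S=\tfrac{1+2\Omega_{kj}+2\Omega_{lj}-\Omega_{kl}}{2}$ and compute $\tfrac{1}{3}\Omega_{ij}^S+\tfrac{1}{3}=\tfrac{1+2\Omega_{kj}+2\Omega_{lj}-\Omega_{kl}}{6}+\tfrac{1}{3}$; collecting the constant terms as $\tfrac{1}{6}+\tfrac{2}{6}=\tfrac{1}{2}$ reproduces exactly $\tfrac{1}{2}+\tfrac{1}{3}\Omega_{kj}+\tfrac{1}{3}\Omega_{lj}-\tfrac{1}{6}\Omega_{kl}=\Omega_{ij}^T$ from Lemma \ref{rgrs}(2).

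Finally, for part (3) with $i,j\in V'$, $\Gamma(i)=\{k,l\}$, $\Gamma(j)=\{p,q\}$, I would substitute $\Omega_{ij}^S=\tfrac{2+\Omega_{pk}+\Omega_{qk}+\Omega_{pl}+\Omega_{ql}-\Omega_{kl}-\Omega_{pq}}{2}$ from Lemma \ref{sg}(4) into $\tfrac{1}{3}\Omega_{ij}^S+\tfrac{2}{3}$; the variable terms become $\tfrac{\Omega_{pk}+\Omega_{qk}+\Omega_{pl}+\Omega_{ql}-\Omega_{kl}-\Omega_{pq}}{6}$, and the constant terms combine as $\tfrac{2}{6}+\tfrac{4}{6}=1$, matching Lemma \ref{rgrs}(3). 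Each identity is thus confirmed by a one-line computation, and the only point requiring a moment of care is the bookkeeping in part (2): one must invoke the extended subdivision formula (4) rather than Lemma \ref{sg}(2)–(3) separately, so that the single expression is valid for every $j\in V$, including $j=k$ or $j=l$. Beyond that, the argument presents no genuine obstacle, being purely a matter of clearing denominators and comparing constant terms.
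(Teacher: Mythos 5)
Your proposal is correct and matches the paper's own argument, which simply observes that Proposition \ref{pc} follows by comparing Lemma \ref{sg} (in its combined form valid for all $j\in V$) with Lemma \ref{rgrs} case by case; your explicit clearing of denominators fills in exactly the ``one immediately obtains'' step the paper leaves to the reader. Your remark that part (2) requires the merged subdivision formula so that the identity also covers $j=k$ or $j=l$ is a correct and worthwhile point of care.
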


Proposition \ref{pc} not only compares resistance distances in $S(G)$ and $T(G)$, but also leads to a convenient way to compute $R(T(G))$, $R^+(T(G))$, and $R^*(T(G))$.

First we treat the Kirchhoff index of $T(G)$.
\begin{thm}\label{t}
Let $G$ be a connected graph with $n\geq 2$ vertices and $m$ edges. Then
\begin{equation}
R(T(G))=\frac{2}{3}R(G)+\frac{1}{3}R^+(G)+\frac{1}{6}R^*(G)+\frac{3m^2-n^2+2mn-2m+n}{6}.
\end{equation}
\end{thm}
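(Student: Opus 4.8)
The plan is to exploit Proposition \ref{pc}, which writes each resistance distance in $T(G)$ as an affine function of the corresponding distance in $S(G)$. This reduces $R(T(G))$ to a combination of sums of distances in $S(G)$ (all of which have already been evaluated in the $S(G)$ section) plus a purely combinatorial counting term coming from the additive constants. Following the same vertex-type decomposition $V(T(G))=V\cup V'$ used throughout the $S(G)$ computations, I would split
\[
R(T(G))=\sum_{\{i,j\}\subseteq V}\Omega_{ij}^T+\sum_{i\in V'}\sum_{j\in V}\Omega_{ij}^T+\sum_{\{i,j\}\subseteq V'}\Omega_{ij}^T
\]
and treat the three parts in turn.

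For the first part, Proposition \ref{pc}(1) gives $\Omega_{ij}^T=\frac{1}{3}\Omega_{ij}^S$, and then Lemma \ref{sg}(1) (namely $\Omega_{ij}^S=2\Omega_{ij}$) yields $\frac{1}{3}\cdot 2R(G)=\frac{2}{3}R(G)$. For the second part, Proposition \ref{pc}(2) gives $\Omega_{ij}^T=\frac{1}{3}\Omega_{ij}^S+\frac{1}{3}$, so the sum equals $\frac{1}{3}\sum_{i\in V'}\sum_{j\in V}\Omega_{ij}^S$ plus $\frac{1}{3}$ times the number $mn$ of such mixed pairs; the first piece is already available from Eq. (\ref{e14}) as $R^+(G)+\frac{mn-n^2+n}{2}$. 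For the third part, Proposition \ref{pc}(3) gives $\Omega_{ij}^T=\frac{1}{3}\Omega_{ij}^S+\frac{2}{3}$, so the sum equals $\frac{1}{3}\sum_{\{i,j\}\subseteq V'}\Omega_{ij}^S$ plus $\frac{2}{3}$ times the number $\binom{m}{2}$ of pairs inside $V'$, and the first piece is supplied by Eq. (\ref{e18}) as $\frac{R^*(G)}{2}+\frac{m(m-n)}{2}$.

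Summing the three contributions then reproduces the coefficients of $R(G)$, $R^+(G)$, and $R^*(G)$ (namely $\frac{2}{3}$, $\frac{1}{3}$, $\frac{1}{6}$) immediately, and the remaining work is to collect the four constant terms $\frac{mn}{3}$, $\frac{2}{3}\binom{m}{2}$, and the constant parts of Eqs. (\ref{e14}) and (\ref{e18}) into the single fraction $\frac{3m^2-n^2+2mn-2m+n}{6}$.

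Since Proposition \ref{pc} together with the two sums already established in the $S(G)$ section does all the structural work, there is no genuine obstacle here; the only care required is in correctly counting the pairs that generate the additive constants ($mn$ mixed pairs and $\binom{m}{2}=\frac{m(m-1)}{2}$ pairs within $V'$), and in the routine algebraic consolidation of the several terms quadratic in $m$ and $n$ into the stated closed form.
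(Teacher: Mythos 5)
Your proposal is correct and follows essentially the same route as the paper: decompose the pairs by vertex type, apply Proposition \ref{pc}, and account for the additive constants over the $mn$ mixed pairs and $\binom{m}{2}$ pairs in $V'$. The only (immaterial) difference is that the paper recombines the three $\Omega^S$-sums into $R(S(G))$ and substitutes Theorem \ref{main}, whereas you evaluate them piecewise via Lemma \ref{sg}(1) and Eqs. (\ref{e14}) and (\ref{e18}); both yield the stated constant $\frac{3m^2-n^2+2mn-2m+n}{6}$.
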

\begin{proof}
Since $V(T(G))=V\cup V'$, by the definition of the Kirchhoff index,
\begin{align*}
R(T(G))=\sum_{\{i,j\}\subseteq (V(T(G))}\Omega^T_{ij}=\sum_{\{i,j\}\subseteq (V\cup V')}\Omega^T_{ij}
=\sum_{\{i,j\}\subseteq V}\Omega^T_{ij}+\sum_{i\in V'}\sum_{j\in V}\Omega^T_{ij}+\sum_{\{i,j\}\subseteq V'}\Omega^T_{ij}.
\end{align*}
Then by Proposition \ref{pc}, it follows that
\begin{align*}
R(T(G))&=\sum_{\{i,j\}\subseteq V}\frac{1}{3}\Omega^S_{ij}+\sum_{i\in V'}\sum_{j\in V}(\frac{1}{3}\Omega^S_{ij}+\frac{1}{3})+\sum_{\{i,j\}\subseteq V'}(\frac{1}{3}\Omega^S_{ij}+\frac{2}{3})\\
&=\frac{1}{3}[\sum_{\{i,j\}\subseteq V}\Omega^S_{ij}+\sum_{i\in V'}\sum_{j\in V}\Omega^S_{ij}+\sum_{\{i,j\}\subseteq V'}\Omega^S_{ij}]+\sum_{i\in V'}\sum_{j\in V}\frac{1}{3}+\sum_{\{i,j\}\subseteq V'}\frac{2}{3}\\
&=\frac{1}{3}R(S(G))+\frac{mn}{3}+\frac{m(m-1)}{3}.
\end{align*}
Then the desired result is obtained simply by substitution of the formula of $R(S(G))$ into the above equation.
\end{proof}

\textit{Remark.} For a regular graph $G$ with regularity $r$, it is shown in \cite{wyl} that
$$R(T(G))=\frac{(r+2)^2}{6}R(G)+\frac{(n^2-n)(r+2)}{6}+\frac{n^2(r^2-4)}{8}+\frac{n}{2}.$$
In fact, this follows directly from Theorem \ref{t} upon noticing that $R^+(G)=2rR(G)$, $R^*(G)=r^2R(G)$, and $m=\frac{nr}{2}$. For a general graph $G$, it is shown in \cite{hzb} that
\begin{align*}
R(T(G))&=\frac{2(n^2-m^2)}{3n^2}R(G)+\frac{(m+n)(3m-n+1)-3m-\pi^TL_G^\#\pi}{6}\\
&+\frac{m+n}{3n}\sum_{i=1}^{n}d_iR_i(G),
\end{align*}
where $\pi=(d_1,d_2,\ldots,d_n)^T$ is the column vector of the degree sequence of $G$, $L_G^\#$ is the group inverse of the Laplacian matrix of $G$, and $R_i(G)$ is the sum of resistance distances between $i$ and all other vertices of $G$.
Our result of Theorem 4.3 appears more ``neat".

Now we treat $R^+(T(G))$, using $d_i^T$ to denote the degree of $i$ in $T(G)$. It is seen by the definition of $T(G)$ that for $i\in V$, $d_i^T=2d_i$, and for $i\in V'$, $d^T_i=2$.
\begin{thm}\label{t+}
Let $G$ be a connected graph with $n\geq 2$ vertices and $m$ edges. Then
\begin{equation}
R^+(T(G))=2R^+(G)+2R^*(G)+\frac{12m^2-mn-n^2-2m+n}{3}.
\end{equation}
\end{thm}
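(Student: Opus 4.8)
The plan is to mirror the proof of Theorem \ref{t}, but now carrying the degree weights $d_i^T$ through the computation. Starting from the definition
$$R^+(T(G))=\sum_{\{i,j\}\subseteq V\cup V'}(d_i^T+d_j^T)\Omega^T_{ij},$$
I would split the sum into the three familiar parts: pairs inside $V$, mixed pairs with $i\in V'$ and $j\in V$, and pairs inside $V'$. In each part I substitute the degree values ($d_i^T=2d_i$ for $i\in V$ and $d_i^T=2$ for $i\in V'$) and use Proposition \ref{pc} to replace every $\Omega^T_{ij}$ by the corresponding expression in $\Omega^S_{ij}$.

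For the $V$--$V$ part the weights become $2d_i+2d_j$ and $\Omega^T_{ij}=\tfrac13\Omega^S_{ij}=\tfrac23\Omega_{ij}$ by Lemma \ref{sg}(1), so this part collapses directly to a scalar multiple of $R^+(G)$. For the mixed part the weight is $2+2d_j$ and $\Omega^T_{ij}=\tfrac13\Omega^S_{ij}+\tfrac13$; expanding, I would reuse the two sums already evaluated in the proof of Theorem \ref{s+}, namely Eq. (\ref{e14}) for $\sum_{i\in V'}\sum_{j\in V}\Omega^S_{ij}$ and Eq. (\ref{e17}) for $\sum_{i\in V'}\sum_{j\in V}d_j\Omega^S_{ij}$, together with the elementary counts $\sum_{j\in V}1=n$ and $\sum_{j\in V}d_j=2m$. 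For the $V'$--$V'$ part the weight is the constant $4$ and $\Omega^T_{ij}=\tfrac13\Omega^S_{ij}+\tfrac23$, so I would invoke Eq. (\ref{e18}) for $\sum_{\{i,j\}\subseteq V'}\Omega^S_{ij}$ and the count $\binom{m}{2}=\tfrac{m(m-1)}{2}$.

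Adding the three contributions, the $R^+(G)$ coefficients combine to $2$ and the $R^*(G)$ coefficients combine to $2$, while the purely $m,n$ terms are summed over a common denominator of $3$ and collapse to $\tfrac{12m^2-mn-n^2-2m+n}{3}$, giving the stated identity. The main obstacle is organizational rather than conceptual: because the degrees do not scale uniformly from $S(G)$ to $T(G)$ (original vertices double their degree while inserted vertices keep degree $2$), one cannot simply extract a scalar multiple of $R^+(S(G))$ as was possible for $R(T(G))$ in Theorem \ref{t}. Consequently the mixed part in particular splits into four separate sums whose polynomial pieces must be tracked individually; reconciling these polynomial terms over the denominator $3$ is the step where arithmetic slips are most likely and where I would concentrate my checking.
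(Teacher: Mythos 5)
Your proposal is correct and follows essentially the same route as the paper: the paper likewise splits $R^+(T(G))$ into the $V$--$V$, $V'$--$V$, and $V'$--$V'$ parts, substitutes $d_i^T=2d_i$ and $d_i^T=2$, converts $\Omega^T_{ij}$ to $\Omega^S_{ij}$ via Proposition \ref{pc}, and then plugs in Eqs. (\ref{e12}), (\ref{e14}), (\ref{e17}), and (\ref{e18}). Your observation that the nonuniform degree scaling blocks a direct reduction to a multiple of $R^+(S(G))$ is exactly why the paper also resorts to this term-by-term substitution, and your stated coefficient combinations ($2$ for $R^+$, $2$ for $R^*$, and the polynomial over denominator $3$) check out.
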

\begin{proof}
By the definition of the additive degree-Kirchhoff index and Proposition \ref{pc}, we have
\begin{align*}
R^+(T(G))=\sum_{\{i,j\}\subseteq V}(d^T_i+d^T_j)\Omega^T_{ij}+\sum_{i\in V'}\sum_{j\in V}(d^T_i+d^T_j)\Omega^T_{ij}+\sum_{\{i,j\}\subseteq V'}(d^T_i+d^T_j)\Omega^T_{ij}.
\end{align*}
Then according to the structure of $T(G)$ and Proposition \ref{pc},
\begin{align*}
R^+(T(G))&=\sum_{\{i,j\}\subseteq V}(2d_i+2d_j)\frac{1}{3}\Omega^S_{ij}+\sum_{i\in V'}\sum_{j\in V}(2+2d_j)[\frac{1}{3}\Omega^S_{ij}+\frac{1}{3}]+\sum_{\{i,j\}\subseteq V'}(2+2)[\frac{1}{3}\Omega^S_{ij}+\frac{2}{3}]\\
&=\frac{2}{3}\sum_{\{i,j\}\subseteq V}(d_i+d_j)\Omega^S_{ij}+\frac{2}{3}\sum_{i\in V'}\sum_{j\in V}(1+d_j)[\Omega^S_{ij}+1]+\frac{4}{3}\sum_{\{i,j\}\subseteq V'}[\Omega^S_{ij}+2]\\
&=\frac{2}{3}\sum_{\{i,j\}\subseteq V}(d_i+d_j)\Omega^S_{ij}+\frac{2}{3}\sum_{i\in V'}\sum_{j\in V}\Omega^S_{ij}+\frac{2}{3}\sum_{i\in V'}\sum_{j\in V}d_j\Omega^S_{ij}+\frac{4}{3}\sum_{\{i,j\}\subseteq V'}\Omega^S_{ij}\\
&~~~~+\frac{2}{3}\sum_{i\in V'}\sum_{j\in V}d_j+\frac{2}{3}\sum_{i\in V'}\sum_{j\in V}1+\frac{4}{3}\sum_{\{i,j\}\subseteq V'}2\\
&=\frac{2}{3}\sum_{\{i,j\}\subseteq V}(d_i+d_j)\Omega^S_{ij}+\frac{2}{3}\sum_{i\in V'}\sum_{j\in V}\Omega^S_{ij}+\frac{2}{3}\sum_{i\in V'}\sum_{j\in V}d_j\Omega^S_{ij}+\frac{4}{3}\sum_{\{i,j\}\subseteq V'}\Omega^S_{ij}\\
&~~~~+\frac{4m^2}{3}+\frac{2mn}{3}+\frac{4m(m-1)}{3}.
\end{align*}
Noticing that the first four terms in the above equality are given in Eqs. (\ref{e12}), (\ref{e14}), (\ref{e17}), and (\ref{e18}), the the desired result is obtained by substitution of these results into this last form for $R^+(T(G))$.
\end{proof}
\begin{thm}\label{t*}
Let $G$ be a connected graph with $n\geq 2$ vertices and $m$ edges. Then
\begin{equation}
R^*(T(G))=8R^*(G)+8m^2-4mn.
\end{equation}
\end{thm}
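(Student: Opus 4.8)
The plan is to mirror the proofs of Theorems \ref{t} and \ref{t+}: split the defining sum for $R^*(T(G))$ over the partition $V(T(G))=V\cup V'$ into the three blocks $\{i,j\}\subseteq V$, $i\in V'$ with $j\in V$, and $\{i,j\}\subseteq V'$, insert the degree data $d_i^T=2d_i$ for $i\in V$ and $d_i^T=2$ for $i\in V'$, and then rewrite every $\Omega_{ij}^T$ through Proposition \ref{pc} so that each block is expressed in terms of $S(G)$-quantities already evaluated earlier in the paper.

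Concretely, first I would write
$$R^*(T(G))=\sum_{\{i,j\}\subseteq V} d_i^T d_j^T\,\Omega_{ij}^T+\sum_{i\in V'}\sum_{j\in V} d_i^T d_j^T\,\Omega_{ij}^T+\sum_{\{i,j\}\subseteq V'} d_i^T d_j^T\,\Omega_{ij}^T.$$
For the first block, $d_i^T d_j^T=4d_id_j$ and Proposition \ref{pc}(1) gives $\Omega_{ij}^T=\tfrac13\Omega_{ij}^S$, so with Lemma \ref{sg}(1) ($\Omega_{ij}^S=2\Omega_{ij}$) this block collapses to a multiple of $\sum_{\{i,j\}\subseteq V}d_id_j\Omega_{ij}=R^*(G)$. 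For the second block, $d_i^T d_j^T=4d_j$ and Proposition \ref{pc}(2) gives $\Omega_{ij}^T=\tfrac13\Omega_{ij}^S+\tfrac13$, which separates the sum into a resistance part controlled by Eq.~(\ref{e17}) and a purely combinatorial part $\sum_{i\in V'}\sum_{j\in V}d_j=m\cdot\sum_{j\in V}d_j=2m^2$ via the handshake identity. For the third block, $d_i^T d_j^T=4$ and Proposition \ref{pc}(3) gives $\Omega_{ij}^T=\tfrac13\Omega_{ij}^S+\tfrac23$, so it reduces to Eq.~(\ref{e18}) together with the count $\sum_{\{i,j\}\subseteq V'}1=\binom{m}{2}$.

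Alternatively, one could bypass Proposition \ref{pc} and work straight from Lemma \ref{rgrs}; then the second block needs Foster's formula to handle the $-\tfrac16\Omega_{kl}$ term (since $\sum_{kl\in E}\Omega_{kl}=n-1$) and Eq.~(\ref{e16}) for the cross term $\sum_{i\in V'}\sum_{j\in V}d_j(\Omega_{kj}+\Omega_{lj})=2R^*(G)$. Either route reuses only material already established, so there is no structural difficulty; the entire argument is essentially a bookkeeping exercise.

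The one place to be careful --- and the step I would treat as the ``main obstacle'' --- is the final collection of the constant (degree-free) terms, where contributions of the form $m^2$, $m(n-1)$, $m(m-n)$, and $m(m-1)$ coming from the three blocks must be combined without slips, and where the several factors of $\tfrac13$, $\tfrac23$, and $4$ are easy to mishandle. To guard against arithmetic error I would check the assembled closed form against a small instance before recording the final identity, e.g.\ $T(K_2)=K_3$ (for which a direct computation gives $R^*=8$) and $T(P_3)$, the graph consisting of two triangles sharing a single vertex, whose resistance distances and degrees can be written down by inspection.
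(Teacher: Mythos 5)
Your plan is exactly the paper's own argument (split over the three blocks, insert $d_i^T=2d_i$ on $V$ and $d_i^T=2$ on $V'$, pass to $S(G)$-quantities via Proposition \ref{pc}, then invoke Eqs.\ (\ref{e17}) and (\ref{e18}) together with the counts $\sum_{i\in V'}\sum_{j\in V}d_j=2m^2$ and $\sum_{\{i,j\}\subseteq V'}1=\binom{m}{2}$), and every individual ingredient you cite is correct. The difficulty is that when this bookkeeping is actually carried out it does not land on the stated right-hand side. The three blocks contribute
\begin{equation*}
\frac43\cdot 2R^*(G),\qquad \frac43\Bigl[2R^*(G)+m^2-m(n-1)\Bigr]+\frac43\cdot 2m^2,\qquad \frac43\Bigl[\frac{R^*(G)}{2}+\frac{m(m-n)}{2}\Bigr]+\frac83\binom{m}{2},
\end{equation*}
so the coefficient of $R^*(G)$ is $\frac83+\frac83+\frac23=6$, not $8$, and the degree-free terms collect to $6m^2-2mn$. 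In other words, the method you describe (whether routed through Proposition \ref{pc} or directly through Lemma \ref{rgrs} with Foster's formula) yields $R^*(T(G))=6R^*(G)+6m^2-2mn$, not $8R^*(G)+8m^2-4mn$.

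Your own proposed sanity checks decide which expression is right, and you should not skip them: $T(K_2)=K_3$ gives $R^*=8$ and happens to satisfy both formulas, but for $T(P_3)$ (two triangles sharing a vertex, with $R^*(P_3)=6$, $n=3$, $m=2$) a direct count gives $R^*(T(P_3))=48$, which matches $6\cdot 6+6\cdot 4-2\cdot 6=48$ and not $8\cdot 6+8\cdot 4-4\cdot 6=56$; the same happens for $T(K_3)$, where the direct value is $84$ versus the stated formula's $100$. So the ``main obstacle'' you flagged --- assembling the constants --- is not merely a place to be careful: the target identity itself is not what the computation delivers, and no amount of careful collection of the $m^2$, $mn$, and $m$ terms will reach $8R^*(G)+8m^2-4mn$. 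Executed honestly, your plan terminates with a different closed form and therefore fails as a proof of the statement as printed; the failure is in the final comparison step, not in the decomposition or in any lemma you invoke. (Be aware that the downstream results, Proposition 4.6 and Theorem \ref{tk*}, inherit the same discrepancy.)
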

\begin{proof}
By Proposition \ref{pc},
\begin{align*}
R^*(T(G))&=\sum_{\{i,j\}\subseteq V\cup V'}d^T_id^T_j\Omega^T_{ij}=\sum_{\{i,j\}\subseteq V}d^T_id^T_j\Omega^T_{ij}+\sum_{i\in V'}\sum_{j\in V}d^T_id^T_j\Omega^T_{ij}+\sum_{\{i,j\}\subseteq V'}d^T_id^T_j\Omega^T_{ij}\notag\\
&=\sum_{\{i,j\}\subseteq V}2d_i2d_j\frac{1}{3}\Omega^S_{ij}+\sum_{i\in V'}\sum_{j\in V}2\cdot2d_j[\frac{1}{3}\Omega^S_{ij}+\frac{1}{3}]+\sum_{\{i,j\}\subseteq V'}2\cdot2[\frac{1}{3}\Omega^S_{ij}+\frac{2}{3}]\\
&=\frac{4}{3}\sum_{\{i,j\}\subseteq V}d_id_j\Omega^S_{ij}+\frac{4}{3}\sum_{i\in V'}\sum_{j\in V}d_j\Omega^S_{ij}+\frac{4}{3}\sum_{\{i,j\}\subseteq V'}\Omega^S_{ij}+\frac{4}{3}\sum_{i\in V'}\sum_{j\in V}d_j+\frac{8}{3}\sum_{\{i,j\}\subseteq V'}1\\
&=\frac{4}{3}\sum_{\{i,j\}\subseteq V}d_id_j\Omega^S_{ij}+\frac{4}{3}\sum_{i\in V'}\sum_{j\in V}d_j\Omega^S_{ij}+\frac{4}{3}\sum_{\{i,j\}\subseteq V'}\Omega^S_{ij}+\frac{8m^2}{3}+\frac{4m(m-1)}{3}.
\end{align*}
Noticing that $\sum\limits_{\{i,j\}\subseteq V}d_id_j\Omega^S_{ij}=2\sum\limits_{\{i,j\}\subseteq V}d_id_j\Omega_{ij}=2R^*(G)$, by Eqs. (\ref{e14}) and (\ref{e17}), we obtain the required result.
\end{proof}

From these proceeding results, comparison of our triplet of Kirchhoffian invariants of $S(G)$ and $T(G)$ result:
\begin{pro}
Let $G$ be a connected graph with $n$ vertices and $m$ edges. Then
\begin{align}
R(T(G))&=\frac{1}{3}R(S(G))+\frac{m(m+n-1)}{3}.\\
R^+(T(G))&=\frac{1}{2}R^+(S(G))+\frac{15m^2+n^2+4mn-7m-n}{6}.\\
R^*(T(G))&=R^*(S(G))+4m^2-2m.
\end{align}
\end{pro}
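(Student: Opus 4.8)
The plan is to prove all three identities by direct substitution of the closed forms already established, organized around one structural observation that makes the computation transparent. Theorems \ref{main}, \ref{s+}, and \ref{s*} express $R(S(G))$, $R^+(S(G))$, and $R^*(S(G))$ as linear combinations of $R(G)$, $R^+(G)$, $R^*(G)$ plus a polynomial in $m$ and $n$, while Theorems \ref{t}, \ref{t+}, and \ref{t*} do the same for the invariants of $T(G)$. The key point I would isolate first is that, for each invariant $\mathbb{I}\in\{R,R^+,R^*\}$, the coefficients of $R(G)$, $R^+(G)$, $R^*(G)$ appearing in $\mathbb{I}(T(G))$ are a fixed constant multiple of those appearing in $\mathbb{I}(S(G))$: the factor is $\tfrac13$ for $R$ (coefficients $\tfrac23,\tfrac13,\tfrac16$ versus $2,1,\tfrac12$), $\tfrac12$ for $R^+$ (coefficients $2,2$ versus $4,4$), and $1$ for $R^*$ (both carry $8R^*(G)$). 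These three factors are precisely the multipliers appearing in the proposition.

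Granting this, the strategy for each line is the same. I would form the combination $\mathbb{I}(T(G))-c\,\mathbb{I}(S(G))$ with $c\in\{\tfrac13,\tfrac12,1\}$ the relevant factor. By the matching-coefficient observation, every term carrying $R(G)$, $R^+(G)$, or $R^*(G)$ cancels, so the difference reduces to a polynomial in $m$ and $n$ alone, which is then expanded and checked against the stated constants $\tfrac{m(m+n-1)}{3}$, $\tfrac{15m^2+n^2+4mn-7m-n}{6}$, and $4m^2-2m$. For $R^*$ this is immediate, since the difference of polynomial parts $(8m^2-4mn)-(4m^2-4mn+2m)$ collapses at once to $4m^2-2m$. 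For $R$ and $R^+$ I would clear the common denominator (both reduce to denominator $6$) and collect like powers of $m$ and $n$.

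I expect the only genuine bookkeeping to occur in the $R^+$ case, where the expression $(m+n)(m-n+1)+2m(m-n)$ from $R^+(S(G))$ must be expanded to $3m^2-2mn-n^2+m+n$ and then combined with $\tfrac{12m^2-mn-n^2-2m+n}{3}$ over the denominator $6$; a sign slip among the mixed $mn$ and $n^2$ terms is the likeliest pitfall, so I would verify the coefficient of each monomial ($m^2$, $mn$, $n^2$, $m$, $n$) separately to confirm the numerator $15m^2+4mn+n^2-7m-n$.

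Finally, I would note that the first identity admits a more conceptual derivation straight from Proposition \ref{pc}: since $\Omega^T_{ij}=\tfrac13\Omega^S_{ij}$ on $V\times V$, $\tfrac13\Omega^S_{ij}+\tfrac13$ on $V'\times V$, and $\tfrac13\Omega^S_{ij}+\tfrac23$ on $V'\times V'$, summing gives $R(T(G))=\tfrac13 R(S(G))+\tfrac13\cdot mn+\tfrac23\cdot\binom{m}{2}=\tfrac13 R(S(G))+\tfrac{m(m+n-1)}{3}$ directly. The same region-wise idea does \emph{not} streamline the $R^+$ and $R^*$ lines, because the degree rescaling $d^T_i=2d_i$ on $V$ but $d^T_i=2=d^S_i$ on $V'$ makes the region-wise scaling factors non-uniform (for $R^*$ they are $\tfrac43,\tfrac23,\tfrac13$ across the three blocks), so no single constant multiple of $R^*(S(G))$ emerges block by block. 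For those two identities the substitution route above is therefore the cleaner path, and it is the one I would carry out in full.
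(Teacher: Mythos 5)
Your proposal is correct and matches the paper's (implicit) argument: the proposition is stated there as an immediate consequence of Theorems \ref{main}, \ref{s+}, \ref{s*}, \ref{t}, \ref{t+}, and \ref{t*}, obtained exactly by the substitution and cancellation you describe, and your arithmetic for all three identities checks out (including the numerator $15m^2+4mn+n^2-7m-n$). Your closing observation that the first identity also follows directly from Proposition \ref{pc} is likewise consistent with the paper, since that is precisely the intermediate identity $R(T(G))=\tfrac13 R(S(G))+\tfrac{mn}{3}+\tfrac{m(m-1)}{3}$ derived in the proof of Theorem \ref{t}.
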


\section{Formulae for our three Kirchhoffian invariants of iterated triangulations}
Set $T^0(G)=G$, $T^k(G)=T(T^{k-1}(G))$, $k=1,2,\ldots$. For $k\geq 0$, in this section, in the proofs, denote the number of vertices and edges of $T^k(G)$ by $n_k$ and $m_k$, respectively. Then it is easily verified that $n_0=n$, $m_0=m$, and for $k\geq 1$,
\begin{equation}\label{mknk}
n_k=\frac{3^k-1}{2}m+n,\quad m_k=3^km.
\end{equation}

Now we are ready to treat the three graph invariants of iterated triangulations, first for $R^*(T^k(G))$.
\begin{thm}\label{tk*}
Let $G$ be a connected graph with $n\geq 2$ vertices and $m$ edges. Then
\begin{equation}\label{e}
R^*(T^k(G))=8^kR^*(G)+(6\cdot 9^k-\frac{2}{5}3^k-\frac{28}{5}8^k)m^2-\frac{4}{5}(8^k-3^k)mn.
\end{equation}
\end{thm}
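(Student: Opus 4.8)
The plan is to mirror the approach used for $R^*(S^k(G))$ in Theorem \ref{sk*}: recast the single-step formula of Theorem \ref{t*} as a first-order linear recurrence in $k$ and solve it with Lemma \ref{lema}. Applying Theorem \ref{t*} to the graph $T^{k-1}(G)$, which has $m_{k-1}$ edges and $n_{k-1}$ vertices, gives for $k\geq 1$
$$R^*(T^k(G))=8R^*(T^{k-1}(G))+8m_{k-1}^2-4m_{k-1}n_{k-1}.$$
I would set $y_k=R^*(T^k(G))$, $f_k=8$, and $g_k=8m_k^2-4m_kn_k$, so that $y_{k+1}=f_ky_k+g_k$ with $y_0=R^*(G)$. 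Then Lemma \ref{lema} applies with $s_{k+1}=(\prod_{j=0}^k 8)^{-1}=8^{-(k+1)}$ and $h_k=s_{k+1}g_k=g_k/8^{k+1}$.

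The key step is to make $g_k$ explicit using Eq. (\ref{mknk}). Since $m_k=3^km$ and $n_k=\frac{3^k-1}{2}m+n$, a short computation gives $m_k^2=9^km^2$ and $m_kn_k=\frac{9^k-3^k}{2}m^2+3^kmn$, so that
$$g_k=(6\cdot 9^k+2\cdot 3^k)m^2-4\cdot 3^k mn.$$
Dividing by $8^{k+1}$ expresses $h_k$ as a combination of the geometric terms $(9/8)^k$ and $(3/8)^k$. Summing, I would evaluate the two series $\sum_{i=0}^k(9/8)^i=8[(9/8)^{k+1}-1]$ and $\sum_{i=0}^k(3/8)^i=\frac{8}{5}[1-(3/8)^{k+1}]$, the factor $\frac{8}{5}$ arising from $1-3/8=5/8$, which is the source of the fifths in the final formula.

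Finally, multiplying $(y_0+\sum_{i=0}^k h_i)$ by $\prod_{j=0}^k f_j=8^{k+1}$ converts the fractional powers back to clean integer powers via $8^{k+1}(9/8)^{k+1}=9^{k+1}$ and $8^{k+1}(3/8)^{k+1}=3^{k+1}$. Collecting the $m^2$ and $mn$ coefficients then yields the stated formula with $k$ replaced by $k+1$, completing the induction implicit in Lemma \ref{lema}.

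The computation presents no conceptual obstacle; it is entirely routine once the recurrence is in place. The only real care needed is bookkeeping: correctly expanding $m_kn_k$, keeping the growing ($9/8$) and decaying ($3/8$) geometric sums separate, and verifying that the $8^{k+1}$ prefactor cancels the denominators to produce the coefficients $6$, $-\frac{28}{5}$, and $-\frac{2}{5}$ on $m^2$. The coefficient $-\frac{28}{5}$ is the easiest place to slip, since it arises from combining $-6$ (the constant part of the $9/8$ sum) with $+\frac{2}{5}$ (from the $3/8$ sum) on the $8^{k+1}m^2$ term.
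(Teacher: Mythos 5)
Your proposal is correct and follows essentially the same route as the paper: the same recurrence from Theorem \ref{t*}, the same substitution of $m_k=3^km$ and $n_k=\frac{3^k-1}{2}m+n$ yielding $g_k=(6\cdot 9^k+2\cdot 3^k)m^2-4\cdot 3^kmn$, and the same application of Lemma \ref{lema} with the two geometric sums in $(9/8)^i$ and $(3/8)^i$. All the arithmetic you outline (including the source of the $-\tfrac{28}{5}$ coefficient as $-6+\tfrac{2}{5}$) checks out against the paper's computation.
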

\begin{proof}
Let $n_k$ and $m_k$ be defined as in Eq. (\ref{mknk}). Then $T^k(G)$ is the graph with $n_k$ vertices and $m_k$ edges. By Theorem \ref{t*},
$$R^*(T^k(G))=8R^*(T^{k-1}(G))+8m_{k-1}^2-4m_{k-1}n_{k-1},~~k\geq 1.$$
Set $y_k=R^*(T^k(G))$, $f_k=8$, $g_k=8m_{k}^2-4m_{k}n_{k}$, $k=0,1,2,\ldots$. Then we have
$$y_{k+1}=f_ky_k+g_k,~~y_0=R^*(G),~~k\geq 0.$$
It is easy to compute that
\begin{align*}
g_k&=8m_{k}^2-4m_{k}n_{k}=4m_k(2m_k-n_k)=4\cdot 3^km(2\cdot 3^km-\frac{3^k-1}{2}m-n)\notag\\
&=(6\cdot 9^k+2\cdot 3^k)m^2-4\cdot 3^kmn.
\end{align*}
By Lemma \ref{lema},
$$y_{k+1}=(y_0+\sum_{i=0}^{k}h_i)\prod_{j=0}^{k}f_j,$$
where $s_0=1$, and for $k\geq 0$, $s_{k+1}=(\prod\limits_{j=0}^{k}f_j)^{-1}=(\prod\limits_{j=0}^{k}8)^{-1}=\frac{1}{8^{k+1}}$, and
$$h_k=s_{k+1}g_k=\frac{(6\cdot 9^k+2\cdot 3^k)m^2-4\cdot 3^kmn}{8^{k+1}}=[\frac{3}{4}(\frac{9}{8})^k+\frac{1}{4}(\frac{3}{8})^k]m^2-\frac{1}{2}(\frac{3}{8})^kmn.$$
Hence
\begin{align*}
y_{k+1}&=\big[y_0+\sum_{i=0}^{k}\big([\frac{3}{4}(\frac{9}{8})^i+\frac{1}{4}(\frac{3}{8})^i]m^2-\frac{1}{2}(\frac{3}{8})^imn\big)\big]\prod_{j=0}^{k}8\\
&=8^{k+1}[R^*(G)+\frac{3}{4}\frac{(\frac{9}{8})^{k+1}-1}{\frac{1}{8}}m^2+\frac{1}{4}\frac{1-(\frac{3}{8})^{k+1}}{\frac{5}{8}}m^2-\frac{1}{2}\frac{1-(\frac{3}{8})^{k+1}}{\frac{5}{8}}mn]\\
&=8^{k+1}R^*(G)+(6\cdot 9^{k+1}-\frac{2}{5}3^{k+1}-\frac{28}{5}8^{k+1})m^2-\frac{4}{5}(8^{k+1}-3^{k+1})mn.
\end{align*}
Thus the proof is completed.
\end{proof}

Now we treat $R(T^k(G))$.

\begin{thm}\label{tk+}
Let $G$ be a connected graph with $n\geq 2$ vertices and $m$ edges. Then
\begin{align}
R^+(T^k(G))=&2^kR^+(G)+\frac{8^k-2^k}{3}R^*(G)+(\frac{9}{4}9^k-\frac{28}{15}8^k-\frac{7}{15}3^k)m^2-(\frac{4}{15}8^k-\frac{14}{15}3^k+\frac{2^k}{3})mn\notag\\
&-(\frac{3^k}{2}-\frac{2^k}{3})m-\frac{2^k(n^2-n)}{3}+\frac{(m-2n)(m-2n+2)}{12}.
\end{align}
\end{thm}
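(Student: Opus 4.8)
The plan is to follow verbatim the machinery already used for $R^*(T^k(G))$ in Theorem \ref{tk*} (and for the subdivision analogues in Theorems \ref{sk*}--\ref{sk+}): turn the single-step formula of Theorem \ref{t+} into a linear recurrence in $k$ and solve it by Lemma \ref{lema}. Applying Theorem \ref{t+} to $T^{k-1}(G)$, which carries $n_{k-1}$ vertices and $m_{k-1}$ edges, yields for $k\ge 1$
\begin{equation*}
R^+(T^k(G))=2R^+(T^{k-1}(G))+2R^*(T^{k-1}(G))+\frac{12m_{k-1}^2-m_{k-1}n_{k-1}-n_{k-1}^2-2m_{k-1}+n_{k-1}}{3}.
\end{equation*}
I would then set $y_k=R^+(T^k(G))$, $f_k=2$, and $g_k=2R^*(T^k(G))+\tfrac{1}{3}\bigl(12m_k^2-m_kn_k-n_k^2-2m_k+n_k\bigr)$, so that $y_{k+1}=f_ky_k+g_k$ with $y_0=R^+(G)$. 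Because $f_k\equiv 2$, we immediately get $\prod_{j=0}^k f_j=2^{k+1}$ and $s_{k+1}=2^{-(k+1)}$, exactly as in the companion proofs; the $y_0$ term will supply the leading $2^kR^+(G)$ after the final index shift.

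The substantive step is to render $g_k$ completely explicit. First I would insert $m_k=3^km$ and $n_k=\tfrac{3^k-1}{2}m+n$ from Eq. (\ref{mknk}) into the polynomial part; collecting powers gives coefficients built from $9^k$, $3^k$, and constants for the monomials $m^2,mn,m$, together with the $k$-independent $n^2$- and $n$-terms. Then I would substitute the closed form for $R^*(T^k(G))$ from Theorem \ref{tk*}, which adds a $2\cdot 8^kR^*(G)$ piece and further $9^k,8^k,3^k$ contributions to the $m^2$ and $mn$ coefficients. After collecting like powers, $g_k$ is a sum of terms $(\text{const})\cdot\rho^k$ with $\rho\in\{8,9,3,1\}$ attached to the fixed quantities $R^*(G),\,m^2,\,mn,\,n^2,\,m,\,n$. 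Forming $h_k=s_{k+1}g_k=g_k/2^{k+1}$ replaces each $\rho^k$ by $\tfrac12(\rho/2)^k$, converting everything into geometric series of ratios $4$ (from $8^k$, which produces the $4^kR^*(G)$ piece and eventually $\tfrac{8^k-2^k}{3}R^*(G)$), $9/2$, $3/2$, and $1/2$.

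The final step is to sum each geometric series $\sum_{i=0}^k h_i$ in closed form, multiply the total by $\prod_{j=0}^k f_j=2^{k+1}$, and re-index $k+1\mapsto k$. The only genuine obstacle is the bookkeeping, and it has a specific delicate feature worth anticipating: summing a ratio-$\rho$ series and multiplying by $2^{k+1}$ always leaves a residual $2^{k+1}$-term of the form $\tfrac{c}{\rho/2-1}(\rho^{k+1}-2^{k+1})$ (or $c(2^{k+1}-1)$ for the constant part). In the $m^2$ coefficient these $2^{k+1}$ residuals from the ratios $9/2,\,4,\,3/2,\,1/2$ must cancel exactly — indeed $-\tfrac94+\tfrac{28}{15}+\tfrac{7}{15}-\tfrac{1}{12}=0$ — so that $m^2$ carries no $2^k$, reassembling as $\tfrac94 9^k-\tfrac{28}{15}8^k-\tfrac{7}{15}3^k$. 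In the $mn$ coefficient, by contrast, the same residuals combine to a nonzero $-\tfrac{2^k}{3}$, giving $-(\tfrac{4}{15}8^k-\tfrac{14}{15}3^k+\tfrac{2^k}{3})$, and the analogous surviving $2^k$-pieces generate the $m$- and $n^2$-terms $-(\tfrac{3^k}{2}-\tfrac{2^k}{3})m-\tfrac{2^k(n^2-n)}{3}$. The leftover $k$-independent constants (the $-\tfrac{1}{12}m^2$, $-\tfrac13 mn$, $+\tfrac13 n^2$, $\pm$ the $m,n$ terms) then coalesce precisely into $\tfrac{(m-2n)(m-2n+2)}{12}$. There is no conceptual difficulty beyond Lemma \ref{lema}; accuracy in tracking which $2^k$ residuals cancel and which survive is the whole of the work.
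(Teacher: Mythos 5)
Your proposal is correct and follows essentially the same route as the paper's proof: the one-step recurrence from Theorem \ref{t+}, solved via Lemma \ref{lema} with $f_k=2$, followed by the same geometric-series bookkeeping (your cancellation check $-\frac{9}{4}+\frac{28}{15}+\frac{7}{15}-\frac{1}{12}=0$ is exactly why the $m^2$ coefficient carries no $2^k$ term). The only nit is a sign slip in your parenthetical list of leftover constants --- the surviving $k$-independent $m^2$ piece is $+\frac{1}{12}m^2$, not $-\frac{1}{12}m^2$, consistent with the expansion of $\frac{(m-2n)(m-2n+2)}{12}$ --- which does not affect the argument.
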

\begin{proof}
By Theorem \ref{t+}, for $k\geq 1$,
\begin{align*}
R^+({T^k(G)})=2R^+(T^{k-1}(G))+2R^*(T^{k-1}(G))+\frac{12m_{k-1}^2-m_{k-1}n_{k-1}-n_{k-1}^2-2m_{k-1}+n_{k-1}}{3}.
\end{align*}
Set $y_k=R^+(T^k(G))$, $f_k=2$, $g_k=2R^*(T^{k}(G))+\frac{12m_{k}^2-m_{k}n_{k}-n_{k}^2-2m_{k}+n_{k}}{3}$, $k=0,1,2,\ldots$. Then we have
$$y_{k+1}=f_ky_k+g_k,~~y_0=R^+(G),~~k\geq 0.$$
By Theorem \ref{tk*}, it is not difficult to compute that
\begin{align*}
g_k=&2R^*(T^{k}(G))+\frac{12m_{k}^2-m_{k}n_{k}-n_{k}^2-2m_{k}+n_{k}}{3}\\
=&2[8^kR^*(G)+(6\cdot 9^k-\frac{2}{5}3^k-\frac{28}{5}8^k)m^2-\frac{4}{5}(8^k-3^k)mn]\\
&+\frac{12\cdot(3^km)^2-3^km(\frac{3^km-1}{2}+n)-(\frac{3^km-1}{2}+n)^2-2\cdot 3^km+(\frac{3^km-1}{2}+n)}{3}\\
=&2\cdot 8^kR^*(G)+(\frac{63}{4}9^k-\frac{56}{5}8^k-\frac{7}{15}3^k)m^2-(\frac{8}{5}8^k-\frac{14}{15}3^k)mn-\frac{3^k}{2}m\\
&-\frac{(m-2n)(m-2n+2)}{12}.
\end{align*}
By Lemma \ref{lema},
$$y_{k+1}=(y_0+\sum_{i=0}^{k}h_i)\prod_{j=0}^{k}f_j,$$
where $s_0=1$, and for $k\geq 0$, $s_{k+1}=(\prod_{j=0}^{k}f_j)^{-1}=(\prod_{j=0}^{k}2)^{-1}=\frac{1}{2^{k+1}}$, and
\begin{align*}
h_k=&s_{k+1}g_k=\frac{1}{2^{k+1}}[2\cdot 8^kR^*(G)+(\frac{63}{4}9^k-\frac{56}{5}8^k-\frac{7}{15}3^k)m^2-(\frac{8}{5}8^k-\frac{14}{15}3^k)mn-\frac{3^k}{2}m\\
&-\frac{(m-2n)(m-2n+2)}{12}]\\
=&4^kR^*(G)+[\frac{63}{8}(\frac{9}{2})^k-\frac{28}{5}4^k-\frac{7}{30}(\frac{3}{2})^k]m^2-[\frac{4}{5}4^k-\frac{7}{15}(\frac{3}{2})^k]mn-\frac{1}{4}(\frac{3}{2})^km\\
&-\frac{1}{2^{k+1}}\frac{(m-2n)(m-2n+2)}{12}.
\end{align*}
Hence
\begin{align*}
&y_{k+1}=(y_0+\sum_{i=0}^{k}h_i)\prod_{j=0}^{k}f_j\\
=&2^{k+1}\big\{R^+(G)+\sum_{i=0}^{k}\big[4^iR^*(G)+\big(\frac{63}{8}(\frac{9}{2})^i-\frac{28}{5}4^i-\frac{7}{30}(\frac{3}{2})^i\big)m^2-\big(\frac{4}{5}4^i-\frac{7}{15}(\frac{3}{2})^i\big)mn\\
&-\frac{1}{4}(\frac{3}{2})^im-\frac{(m-2n)(m-2n+2)}{2^{i+1}}\big]\big\}\\
=&2^{k+1}R^+(G)+2^{k+1}\big\{\frac{{4^{k+1}-1}}{3}R^*(G)+\big[\frac{9}{4}\big((\frac{9}{2})^{k+1}-1\big)-\frac{28}{15}(4^{k+1}-1)-\frac{7}{15}\big((\frac{3}{2})^{k+1}-1\big)\big]m^2\\
&-\big[\frac{4}{15}(4^{k+1}-1)-\frac{14}{15}\big((\frac{3}{2})^{k+1}-1\big)\big]mn-\frac{1}{2}[(\frac{3}{2})^{k+1}-1]m\\
&-[1-(\frac{1}{2})^{k+1}]\frac{(m-2n)(m-2n+2)}{12}\big\}\\
=&2^{k+1}R^+(G)+\frac{8^{k+1}-2^{k+1}}{3}R^*(G)+(\frac{9}{4}9^{k+1}-\frac{28}{15}8^{k+1}-\frac{7}{15}3^{k+1})m^2-(\frac{3^{k+1}}{2}-\frac{2^{k+1}}{3})m\\
&-(\frac{4}{15}8^{k+1}-\frac{14}{15}3^{k+1}+\frac{2^{k+1}}{3})mn-\frac{2^{k+1}}{3}n^2+\frac{2^{k+1}}{3}n+\frac{(m-2n)(m-2n+2)}{12}.
\end{align*}
Then set $R^+(T^k(G))=y_k$ to yield the desired result.
\end{proof}

We end this section by treating $R(T^k(G))$.

\begin{thm}\label{tk}
Let $G$ be a connected graph with $n\geq 2$ vertices and $m$ edges. Then
\begin{align}
&R(T^k(G))=(\frac{2}{3})^kR(G)+[\frac{2^k}{4}-\frac{1}{4}(\frac{2}{3})^k]R^+(G)+[\frac{5}{132}8^k-\frac{2^k}{12}+\frac{1}{22}(\frac{2}{3})^{k}]R^*(G)\notag\\
&+[\frac{57}{200}9^k-\frac{7}{3}8^k-\frac{11}{84}3^k+\frac{2683}{46200}(\frac{2}{3})^k]m^2-[\frac{8^k}{33}-\frac{11}{42}3^k+\frac{2^k}{12}+\frac{137}{924}(\frac{2}{3})^k]mn\notag\\
&-[2^k-(\frac{2}{3})^k]\frac{n^2-n}{12}-[\frac{5}{28}3^k-\frac{2^k}{12}-\frac{2}{21}(\frac{2}{3})^k]m-[1-(\frac{2}{3})^k]\frac{(m-2n)(m-2n+2)}{24}.
\end{align}
\end{thm}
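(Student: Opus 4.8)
The plan is to follow verbatim the template used in the proofs of Theorems \ref{sk*}, \ref{sk+}, and especially \ref{tk+}: turn the computation of $R(T^k(G))$ into a first-order linear recursion and solve it with Lemma \ref{lema}. Applying Theorem \ref{t} with $T^{k-1}(G)$ in the role of $G$ yields, for $k\geq 1$,
$$R(T^k(G))=\frac{2}{3}R(T^{k-1}(G))+\frac{1}{3}R^+(T^{k-1}(G))+\frac{1}{6}R^*(T^{k-1}(G))+\frac{3m_{k-1}^2-n_{k-1}^2+2m_{k-1}n_{k-1}-2m_{k-1}+n_{k-1}}{6}.$$
I would then set $y_k=R(T^k(G))$, $f_k=\frac{2}{3}$, and gather the remaining terms into
$$g_k=\frac{1}{3}R^+(T^k(G))+\frac{1}{6}R^*(T^k(G))+\frac{3m_k^2-n_k^2+2m_kn_k-2m_k+n_k}{6},$$
so that $y_{k+1}=f_ky_k+g_k$ with $y_0=R(G)$.

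The next step is to write $g_k$ in explicit closed form. Substituting the formulas of Theorem \ref{tk*} for $R^*(T^k(G))$ and Theorem \ref{tk+} for $R^+(T^k(G))$, together with $m_k=3^km$ and $n_k=\frac{3^k-1}{2}m+n$, expresses $g_k$ as a linear combination of $R^+(G)$ and $R^*(G)$ (with coefficients assembled from $2^k$ and $8^k$) plus a polynomial in $m$ and $n$ whose coefficients are built from the bases $9^k$, $8^k$, $3^k$, $2^k$, and the constant $1$. This is where the bookkeeping is heaviest: the purely polynomial block $\frac{3m_k^2-n_k^2+2m_kn_k-2m_k+n_k}{6}$ must be expanded via $m_k,n_k$ (so that, for instance, $n_k^2$ contributes $\frac{(3^k-1)^2}{4}m^2+(3^k-1)mn+n^2$) and then merged with the polynomial parts already carried by $R^+(T^k(G))$ and $\frac12R^*(T^k(G))$, including the $\frac{(m-2n)(m-2n+2)}{12}$ term inherited from Theorem \ref{tk+}.

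With $g_k$ in hand, Lemma \ref{lema} applies with $s_{k+1}=\bigl(\prod_{j=0}^{k}f_j\bigr)^{-1}=(2/3)^{-(k+1)}=(3/2)^{k+1}$, so $h_k=(3/2)^{k+1}g_k$. Multiplying $g_k$ by $(3/2)^{k+1}$ sends each base $c^k$ occurring in $g_k$ to a constant multiple of $(3c/2)^k$; concretely $2^k\mapsto 3^k$, $8^k\mapsto 12^k$, $9^k\mapsto(27/2)^k$, $3^k\mapsto(9/2)^k$, and the constant ($1^k$) pieces yield $(3/2)^k$. I would then evaluate $\sum_{i=0}^{k}h_i$ base by base using $\sum_{i=0}^{k}r^i=\frac{r^{k+1}-1}{r-1}$, and finally multiply the bracket $y_0+\sum_{i=0}^{k}h_i$ by $\prod_{j=0}^{k}f_j=(2/3)^{k+1}$. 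This last multiplication converts the bases back, since $(3c/2)^{k+1}(2/3)^{k+1}=c^{k+1}$, so $r=3$ reproduces $2^k$, $r=12$ reproduces $8^k$, $r=27/2$ reproduces $9^k$, $r=9/2$ reproduces $3^k$, while the $-1$ residue of each geometric sum together with $y_0(2/3)^{k+1}$ produces the $(2/3)^k$ terms; the $r=3/2$ base collapses to bare constants, explaining the $[1-(2/3)^k]$ factor on the $(m-2n)(m-2n+2)$ term. Setting $R(T^k(G))=y_k$ then completes the proof.

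The main obstacle is purely computational rather than conceptual: $g_k$ simultaneously mixes five exponential bases with the $R^+(G)$ and $R^*(G)$ contributions, and each must survive the multiplication by $(3/2)^{k+1}$, the term-by-term geometric summation, and the final multiplication by $(2/3)^{k+1}$ with its rational coefficients intact. Recovering the precise denominators appearing in the statement—$132$, $200$, $84$, $46200$, $33$, and $924$—requires careful collection of fractions, but introduces no idea beyond the machinery already deployed for Theorem \ref{tk+}.
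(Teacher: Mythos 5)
Your proposal is correct and follows essentially the same route as the paper's own proof: the recursion from Theorem \ref{t} with $f_k=\frac{2}{3}$, the closed form of $g_k$ obtained from Theorems \ref{tk*} and \ref{tk+} together with $m_k=3^km$ and $n_k=\frac{3^k-1}{2}m+n$, and Lemma \ref{lema} with $s_{k+1}=(\frac{3}{2})^{k+1}$, including the same base-tracking $2^k\mapsto 3^k$, $8^k\mapsto 12^k$, $9^k\mapsto(\frac{27}{2})^k$, $3^k\mapsto(\frac{9}{2})^k$ and the constant block collapsing to the $[1-(\frac{2}{3})^k]$ factor. Nothing is missing beyond the arithmetic you already flag as the main labor.
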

\begin{proof}
By Theorem \ref{t},
\begin{align*}
R({T^k(G)})=&\frac{2}{3}R(T^{k-1}(G))+\frac{1}{3}R^+(T^{k-1}(G))+\frac{1}{6}R^*(T^{k-1}(G))\\
&+\frac{3m_{k-1}^2-n_{k-1}^2+2m_{k-1}n_{k-1}-2m_{k-1}+n_{k-1}}{6},\quad k\geq 1.
\end{align*}
Set $y_k=R(T^k(G))$, $f_k=\frac{2}{3}$, $g_k=\frac{1}{3}R^+(T^{k}(G))+\frac{1}{6}R^*(T^{k}(G))+\frac{3m_{k}^2-n_{k}^2+2m_{k}n_{k}-2m_{k}+n_{k}}{6}$, $k=0,1,2,\ldots$. Then we have
$$y_{k+1}=f_ky_k+g_k,~~y_0=R(G),~~k\geq 0.$$
By Theorems \ref{tk*} and \ref{tk+}, it is not difficult to compute that
\begin{align*}
g_k=&\frac{1}{3}R^+(T^{k}(G))+\frac{1}{6}R^*(T^{k}(G))+\frac{3m_{k}^2-n_{k}^2+2m_{k}n_{k}-2m_{k}+n_{k}}{6}\\
=&\frac{1}{3}[2^kR^+(G)+\frac{8^k-2^k}{3}R^*(G)+(\frac{9}{4}9^k-\frac{28}{15}8^k-\frac{7}{15}3^k)m^2-(\frac{4}{15}8^k-\frac{14}{15}3^k+\frac{2^k}{3})mn\notag\\
&-(\frac{3^k}{2}-\frac{2^k}{3})m-\frac{2^k(n^2-n)}{3}+\frac{(m-2n)(m-2n+2)}{12}]\\
&+\frac{1}{6}[8^kR^*(G)+(6\cdot 9^k-\frac{2}{5}3^k-\frac{28}{5}8^k)m^2-\frac{4}{5}(8^k-3^k)mn]\\
&+\frac{3(3^km)^2-(\frac{3^k-1}{2}m+n)^2+2\cdot 3^km(\frac{3^k-1}{2}m+n)-2\cdot3^km+\frac{3^k-1}{2}m+n}{6}\\
=&\frac{2^k}{3}R^+(G)+(\frac{5}{18}8^k-\frac{2^k}{9})R^*(G)+(\frac{19}{8}9^k-\frac{14}{9}8^k-\frac{11}{36}3^k)m^2-(\frac{2}{9}8^k-\frac{11}{18}3^k+\frac{2^k}{9})mn\\
&-(\frac{5}{12}3^k-\frac{2^k}{9})m-2^k\frac{n^2-n}{9}-\frac{(m-2n)(m-2n+2)}{72}.
\end{align*}
By Lemma \ref{lema},
$$y_{k+1}=(y_0+\sum_{i=0}^{k}h_i)\prod_{j=0}^{k}f_j,$$
where $s_0=1$ and for $k\geq 0$, $s_{k+1}=(\prod\limits_{j=0}^{k}f_j)^{-1}=(\prod\limits_{j=0}^{k}\frac{2}{3})^{-1}=(\frac{3}{2})^{k+1}$, and
\begin{align*}
h_k=&s_{k+1}g_k=(\frac{3}{2})^{k+1}[\frac{2^k}{3}R^+(G)+(\frac{5}{18}8^k-\frac{2^k}{9})R^*(G)+(\frac{19}{8}9^k-\frac{14}{9}8^k-\frac{11}{36}3^k)m^2\\
&-(\frac{2}{9}8^k-\frac{11}{18}3^k+\frac{2^k}{9})mn-(\frac{5}{12}3^k-\frac{2^k}{9})m-2^k\frac{n^2-n}{9}-\frac{(m-2n)(m-2n+2)}{72}]\\
=&\frac{3^k}{2}R^+(G)+(\frac{5}{12}12^k-\frac{3^k}{6})R^*(G)+[\frac{57}{16}(\frac{27}{2})^k-\frac{7}{3}12^k-\frac{11}{24}(\frac{9}{2})^k]m^2\\
&-[\frac{12^k}{3}-\frac{11}{12}(\frac{9}{2})^k+\frac{3^k}{6}]mn-[\frac{5}{8}(\frac{9}{2})^k-\frac{3^k}{6}]m-\frac{3^k(n^2-n)}{6}-(\frac{3}{2})^k\frac{(m-2n)(m-2n+2)}{48}.
\end{align*}
Hence
\begin{align*}
&y_{k+1}=(y_0+\sum_{i=0}^{k}h_i)\prod_{j=0}^{k}f_j\\
=&(\frac{2}{3})^{k+1}\big\{R(G)+\sum_{i=0}^{k}\big(\frac{3^i}{2}R^+(G)+(\frac{5}{12}12^i-\frac{3^i}{6})R^*(G)+[\frac{57}{16}(\frac{27}{2})^i-\frac{7}{3}12^i-\frac{11}{24}(\frac{9}{2})^i]m^2\\
&-[\frac{12^i}{3}-\frac{11}{12}(\frac{9}{2})^i+\frac{3^i}{6}]mn-[\frac{5}{8}(\frac{9}{2})^i-\frac{3^i}{6}]m-\frac{3^i(n^2-n)}{6}-(\frac{3}{2})^i\frac{(m-2n)(m-2n+2)}{48}\big)\big\}\\
=&(\frac{2}{3})^{k+1}R(G)+(\frac{2}{3})^{k+1}\big\{\frac{3^{k+1}-1}{4}R^+(G)+[\frac{5}{132}(12^{k+1}-1)-\frac{3^{k+1}-1}{12}]R^*(G)\\
&+\big[\frac{57}{200}\big((\frac{27}{2})^{k+1}-1\big)-\frac{7}{33}(12^{k+1}-1)-\frac{11}{84}\big((\frac{9}{2})^{k+1}-1\big)\big]m^2-\big[\frac{1}{33}(12^{k+1}-1)\\
&-\frac{11}{42}\big((\frac{9}{2})^{k+1}-1\big)-\frac{3^{k+1}-1}{12}\big]mn-\big[\frac{5}{28}\big((\frac{9}{2})^{k+1}-1\big)-\frac{3^{k+1}-1}{12}\big]m\\
&-\frac{(3^{k+1}-1)(n^2-n)}{12}-\big((\frac{3}{2})^{k+1}-1\big)\frac{(m-2n)(m-2n+2)}{24}\big\}\\
=&(\frac{2}{3})^{k+1}R(G)+\big[\frac{2^{k+1}}{4}-\frac{1}{4}(\frac{2}{3})^{k+1}\big]R^+(G)+\big[\frac{5}{132}(8^{k+1}-(\frac{2}{3})^{k+1})-\frac{1}{12}(2^{k+1}-(\frac{2}{3})^{k+1})\big]R^*(G)\\
&+\big[\frac{57}{200}\big(9^{k+1}-(\frac{2}{3})^{k+1}\big)-\frac{7}{33}(8^{k+1}-(\frac{2}{3})^{k+1})-\frac{11}{84}\big(3^{k+1}-(\frac{2}{3})^{k+1}\big)\big]m^2-\big[\frac{1}{33}\big(8^{k+1}-(\frac{2}{3})^{k+1}\big)\\
&-\frac{11}{42}\big((3^{k+1}-(\frac{2}{3})^{k+1}\big)+\frac{1}{12}(2^{k+1}-(\frac{2}{3})^{k+1})\big]mn-\big[\frac{5}{28}(3^{k+1}-1)-\frac{1}{12}\big(2^{k+1}-(\frac{2}{3})^{k+1}\big)\big]m\\
&-\frac{(2^{k+1}-(\frac{2}{3})^{k+1})(n^2-n)}{12}-[1-(\frac{2}{3})^{k+1}]\frac{(m-2n)(m-2n+2)}{24}\\
=&(\frac{2}{3})^{k+1}R(G)+[\frac{2^{k+1}}{4}-\frac{1}{4}(\frac{2}{3})^{k+1}]R^+(G)+[\frac{5}{132}8^{k+1}-\frac{2^{k+1}}{12}+\frac{1}{22}(\frac{2}{3})^{k+1}]R^*(G)\notag\\
&+[\frac{57}{200}9^{k+1}-\frac{7}{3}8^{k+1}-\frac{11}{84}3^{k+1}+\frac{2683}{46200}(\frac{2}{3})^{k+1}]m^2-[2^{k+1}-(\frac{2}{3})^{k+1}]\frac{n^2-n}{12}\notag\\
&-[\frac{8^{k+1}}{33}-\frac{11}{42}3^{k+1}+\frac{2^{k+1}}{12}+\frac{137}{924}(\frac{2}{3})^{k+1}]mn-[\frac{5}{28}3^{k+1}-\frac{2^{k+1}}{12}-\frac{2}{21}(\frac{2}{3})^{k+1}]m\\
&-[1-(\frac{2}{3})^{k+1}]\frac{(m-2n)(m-2n+2)}{24}.
\end{align*}
Thus the desired result follows.
\end{proof}

From the formulas of Sections 4 and 5, it seems triangulation is more ``complicated" than subdivision--at least with regard to our Kirchhoffian graph invariants.

\section{Acknowledgement}
The authors acknowledge the support of the Welch Foundation of Houston, Texas through grant BD-0894. Y. Yang acknowledges the support of the National Science Foundation of China through Grant Nos. 11201404 and 11371307, China Postdoctoral Science Foundation through Grant Nos. 2012M521318 and 2013T60662, Special Funds for Postdoctoral Innovative Projects of Shandong Province through Grant No. 201203056.

\end{document}